\definecolor {refcol}{RGB}{40,0,255}
\newfont{\footsc}{cmcsc10 at 8truept}
\newfont{\footbf}{cmbx10 at 8truept}
\newfont{\footrm}{cmr10 at 10truept}
\newtheorem{theorem}{Theorem}
\newtheorem{corollary}[theorem]{Corollary}
\newtheorem{definition}[theorem]{Definition}
\newtheorem{lemma}[theorem]{Lemma}
\newtheorem{proposition}[theorem]{Proposition}
\newenvironment{proof}[1][Proof]{\noindent{\textbf {#1}  }}  {\hfill$\Box$\bigskip}
\begin{document}

\title{\textbf{On the }$A_{\alpha}$\textbf{-spectra of trees}}
\author{Vladimir Nikiforov\thanks{Department of Mathematical Sciences, University of
Memphis, Memphis TN 38152, USA.} , Germain Past\'{e}n\thanks{Department of
Mathematics, Universidad Cat\'{o}lica del Norte, Antofagasta, Chile} , Oscar
Rojo\footnotemark[2] , and Ricardo L. Soto\footnotemark[2]}
\date{}
\maketitle

\begin{abstract}
Let $G$ be a graph with adjacency matrix $A(G)$ and let $D(G)$ be the diagonal
matrix of the degrees of $G$. For every real $\alpha\in\left[  0,1\right]  ,$
define the matrix $A_{\alpha}\left(  G\right)  $ as
\[
A_{\alpha}\left(  G\right)  =\alpha D\left(  G\right)  +(1-\alpha)A\left(
G\right)
\]
where $0\leq\alpha\leq1$.

This paper gives several results about the $A_{\alpha}$-matrices of trees. In
particular, it is shown that if $T_{\Delta}$ is a tree of maximal degree
$\Delta,$ then the spectral radius of $A_{\alpha}(T_{\Delta})$ satisfies the
tight inequality
\[
\rho(A_{\alpha}(T_{\Delta}))<\alpha\Delta+2(1-\alpha)\sqrt{\Delta-1}.
\]
This bound extends previous bounds of Godsil, Lov\'{a}sz, and Stevanovi\'{c}.
The proof is based on some new results about the $A_{\alpha}$-matrices of
Bethe trees and generalized Bethe trees.

In addition, several bounds on the spectral radius of $A_{\alpha}$ of general
graphs are proved, implying tight bounds for paths and Bethe trees.

\end{abstract}

\textbf{AMS classification: }\textit{ 05C50, 15A48}

\textbf{Keywords: }\textit{convex combination of matrices; signless Laplacian;
adjacency matrix; tree; generalized Bethe tree.}

\section{Introduction}

Let $G$ be a graph with adjacency matrix $A(G)$, and let $D\left(  G\right)  $
be the diagonal matrix of its vertex degrees. In \cite{Nik16}, it was proposed
to study the family of matrices $A_{\alpha}(G)$ defined for any real
$\alpha\in\left[  0,1\right]  $ as
\[
A_{\alpha}(G)=\alpha D(G)+(1-\alpha)A(G).
\]
Since $A_{0}\left(  G\right)  =A\left(  G\right)  $ and $2A_{1/2}\left(
G\right)  =Q\left(  G\right)  $, where $Q\left(  G\right)  $ is the signless
Laplacian of $G,$ the matrices $A_{a}$ can underpin a unified theory of
$A\left(  G\right)  $ and $Q\left(  G\right)  .$

The primary purpose of this paper is to study the $A_{\alpha}$-matrices of
trees. Our first goal is to give a tight upper bound on the spectral radius
$\rho\left(  A_{\alpha}\left(  T_{\Delta}\right)  \right)  $, where
$T_{\Delta}$ is a tree with maximal degree $\Delta.$ Let us recall that for
the adjacency matrix Godsil \cite{God84} gave the tight bound
\[
\rho\left(  A\left(  T_{\Delta}\right)  \right)  <2\sqrt{\Delta-1}.
\]
The crucial idea of the proof, which Godsil attributes to I. Gutman, is the
fact that a tree of maximal degree $\Delta$ can be embedded into a
sufficiently large Bethe tree of degree $\Delta.$ To estimate the spectral
radius of such Bethe trees, Godsil applied an intricate result from
\cite{HeLi72}. Later, Lov\'{a}sz solved the same problem, and found precisely
the spectral radius of Bethe trees (see \cite{Lov93}, Problems 5 and 14).
Independently, Stevanovi\'{c} \cite{Ste03} also proposed a self-contained
calculation, and, in addition, proved the tight bound
\[
\rho\left(  Q\left(  T_{\Delta}\right)  \right)  <\Delta+2\sqrt{\Delta-1},
\]
which was stated for the spectral radius $\rho\left(  L\left(  T_{\Delta
}\right)  \right)  $ of the Laplacian of $T_{\Delta},$ but since trees are
bipartite graphs, we have $\rho\left(  Q\left(  T_{\Delta}\right)  \right)
=\rho\left(  L\left(  T_{\Delta}\right)  \right)  $.

In the following theorem, we extend the results of Godsil, Lov\'{a}sz, and
Stevanovi\'{c} to the whole family $A_{\alpha}\left(  T_{\Delta}\right)  $:

\begin{theorem}
\label{t1}If $T_{\Delta}$ is a tree of maximal degree $\Delta$ and $\alpha
\in\left[  0,1\right]  ,$ then
\[
\rho\left(  A_{\alpha}\left(  T_{\Delta}\right)  \right)  <\alpha
\Delta+2(1-\alpha)\sqrt{\Delta-1}.
\]
This bound is tight.
\end{theorem}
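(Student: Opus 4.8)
The plan is to follow the embedding strategy attributed to Gutman and used by Godsil, but to push it through the whole $A_\alpha$ family. The key observation is that any tree $T_\Delta$ of maximal degree $\Delta$ can be embedded as a subtree of a sufficiently large \emph{Bethe tree} $B_{\Delta,k}$ — the rooted tree in which the root has $\Delta$ children and every internal vertex down to depth $k$ has $\Delta-1$ children, so that every non-leaf vertex has degree exactly $\Delta$. Choosing $k$ large enough, $T_\Delta$ sits inside $B_{\Delta,k}$ as a subgraph, and since $A_\alpha$ is entrywise monotone in the sense that adding vertices and edges (and thereby increasing degrees) can only increase the spectral radius of a nonnegative symmetric matrix, we would get $\rho(A_\alpha(T_\Delta)) \le \rho(A_\alpha(B_{\Delta,k}))$. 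The first step, then, is to make this monotonicity precise: $A_\alpha(T_\Delta)$ is a principal submatrix of $A_\alpha(B_{\Delta,k})$ up to the diagonal-degree correction, and one must check that replacing $T_\Delta$'s degrees by the full degree $\Delta$ only raises $\rho$, which follows from Perron–Frobenius and the fact that $A_\alpha(B_{\Delta,k}) - A_\alpha(T_\Delta)$ (suitably padded) is entrywise nonnegative.

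The second and central step is to compute or sharply bound $\rho(A_\alpha(B_{\Delta,k}))$ for every $k$. Here I would exploit the high symmetry of the Bethe tree: the automorphism group acts transitively on each level, so a Perron eigenvector is constant on levels and the eigenvalue problem collapses from an $n\times n$ system to a $k\times k$ tridiagonal (Jacobi) system indexed by depth. Writing out the three-term recurrence for the level values of the Perron vector, the spectral radius $\rho(A_\alpha(B_{\Delta,k}))$ is the largest eigenvalue of this small symmetric tridiagonal matrix whose diagonal entries encode the $\alpha D$ part (entries near $\alpha\Delta$) and whose off-diagonal entries encode the $(1-\alpha)A$ part (entries of order $(1-\alpha)\sqrt{\Delta-1}$ after the standard symmetrizing scaling by $\sqrt{\Delta-1}$). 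This is exactly where I expect the supporting results on Bethe trees and generalized Bethe trees promised in the excerpt to do the work, so I would invoke them to pin down the eigenvalues of this tridiagonal matrix.

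The third step is the asymptotic estimate. As $k\to\infty$ the finite tridiagonal matrix approaches a semi-infinite Jacobi matrix with \emph{constant} diagonal $\alpha\Delta$ and constant off-diagonal $(1-\alpha)\sqrt{\Delta-1}$, whose spectrum is the interval $[\alpha\Delta - 2(1-\alpha)\sqrt{\Delta-1},\ \alpha\Delta + 2(1-\alpha)\sqrt{\Delta-1}]$ — this is the classical Chebyshev/free-Jacobi computation, and its top of spectrum is precisely the target value $\alpha\Delta + 2(1-\alpha)\sqrt{\Delta-1}$. The point is that for every \emph{finite} $k$ the largest eigenvalue of the truncated matrix stays strictly below this limit (because the boundary perturbation at the leaf level — where the degree is $1$, not $\Delta$ — lowers the diagonal there), giving the strict inequality
\[
\rho(A_\alpha(B_{\Delta,k})) < \alpha\Delta + 2(1-\alpha)\sqrt{\Delta-1}
\]
uniformly in $k$. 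Combining with the monotonicity of Step 1 yields the theorem. Finally, tightness follows by letting $k\to\infty$: since $\rho(A_\alpha(B_{\Delta,k}))\uparrow \alpha\Delta + 2(1-\alpha)\sqrt{\Delta-1}$, no smaller constant works.

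I expect the main obstacle to be the second step — the exact or sufficiently sharp control of the largest eigenvalue of the level-reduced tridiagonal matrix for all finite $k$, and in particular proving the \emph{strict} inequality uniformly in $k$ rather than merely the limiting bound. The subtlety is that the off-diagonal entries of the reduced matrix are not all equal to $(1-\alpha)\sqrt{\Delta-1}$: the entry coupling the root level to the next involves $\sqrt{\Delta}$ rather than $\sqrt{\Delta-1}$, and the leaf level has no degree-$\Delta$ correction, so the matrix is a constant Jacobi matrix only in its interior. Handling these two boundary discrepancies carefully — showing the root perturbation does not push $\rho$ above the limit while the leaf perturbation keeps it strictly below — is the delicate part, and is presumably exactly what the new Bethe-tree and generalized-Bethe-tree results in the paper are designed to resolve.
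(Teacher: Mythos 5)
Your first two steps are exactly the paper's: embed $T_\Delta$ in a large Bethe tree and collapse the eigenproblem, via the level structure, to a $k\times k$ symmetric tridiagonal matrix (the paper gets this reduction from its generalized-Bethe-tree machinery, Corollary \ref{bethe}). The genuine gap is your Step 3. You never prove the uniform strict bound $\rho(A_\alpha(B))<\alpha\Delta+2(1-\alpha)\sqrt{\Delta-1}$; you observe that it should follow from controlling ``two boundary discrepancies'' and defer exactly that control to unspecified supporting results --- but that deferred step \emph{is} the theorem. Worse, under your own normalization the heuristic you do give (``the leaf perturbation lowers the diagonal, hence the truncation stays strictly below the limit'') does not go through: you chose the root to have $\Delta$ children, so the reduced Jacobi matrix has one off-diagonal entry equal to $(1-\alpha)\sqrt{\Delta}$, \emph{larger} than the constant value $(1-\alpha)\sqrt{\Delta-1}$. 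Your matrix is therefore dominated by the constant Jacobi matrix neither entrywise nor in the positive semidefinite order (the difference has an indefinite $2\times2$ block), so Weyl monotonicity does not apply; and the naive row-sum bound also misses the target, since the row carrying the $\sqrt{\Delta}$ entry has sum $\alpha\Delta+(1-\alpha)\left(\sqrt{\Delta}+\sqrt{\Delta-1}\right)>\alpha\Delta+2(1-\alpha)\sqrt{\Delta-1}$. The bound is still true for that tree, but proving it requires real work.

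The paper sidesteps all of this with one choice you missed: it embeds $T_\Delta$ into $B(\Delta-1,k)$, whose \emph{root has degree $\Delta-1$}, not $\Delta$ (every tree of maximal degree $\Delta$ still embeds). Then the root coupling in the reduced matrix is $(1-\alpha)\sqrt{d_k}=(1-\alpha)\sqrt{\Delta-1}$, so \emph{all} off-diagonals equal $(1-\alpha)\sqrt{\Delta-1}$ and the diagonal is $\alpha,\alpha\Delta,\ldots,\alpha\Delta,\alpha(\Delta-1)$. For $\alpha<1$ this is an irreducible nonnegative matrix whose row sums are not all equal, so by the classical Frobenius bound its spectral radius is \emph{strictly} less than its largest row sum, which is exactly $\alpha\Delta+2(1-\alpha)\sqrt{\Delta-1}$; that single sentence is the paper's entire Step 3. (With this convention your Weyl idea also works verbatim: the reduced matrix is the constant Jacobi matrix minus a nonnegative diagonal matrix, giving $\rho\le\alpha\Delta+2(1-\alpha)\sqrt{\Delta-1}\cos\left(\pi/(k+1)\right)$.) Two smaller points: your tightness claim, that $\rho(A_\alpha(B(\Delta-1,k)))$ tends to the bound as $k\to\infty$, is likewise only asserted --- it needs a matching lower bound, which the paper supplies in Section \ref{Sb} and which can also be had from Cauchy interlacing with the interior constant block; and both your argument and the paper's silently require $\alpha<1$, since at $\alpha=1$ the reduced matrix is diagonal, the strict row-sum bound fails, and indeed the stated inequality itself is false there, as $\rho(A_1(T_\Delta))=\Delta$.
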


To prove Theorem \ref{t1}, we calculate the spectra of certain Bethe trees,
providing, in fact, more than is needed for the proof of Theorem \ref{t1}:
namely, in Section \ref{gbt}, we introduce generalized Bethe trees and give a
reduction procedure for calculating their $A_{\alpha}$-spectra, thereby
extending the main results of \cite{RoSo05}.\medskip

Our next result, proved in Section \ref{ton}, is an absolute upper bound on
$\rho(A_{\alpha}(T))$ of a tree of order $n$. For $A\left(  G\right)  $ such
result has been proved in \cite{LoPe73}, and for $Q\left(  G\right)  $ in
\cite{Che02}.

\begin{theorem}
\label{t2}If $T$ is a tree of order $n$ and $\alpha\in\left[  0,1\right]  ,$
then
\[
\rho(A_{\alpha}(T))\leq\frac{\alpha n+\sqrt{\alpha^{2}n^{2}+4\left(
n-1\right)  \left(  1-2\alpha\right)  }}{2}.
\]
Equality holds if and only if $T$ is the star $K_{1,n-1}$.
\end{theorem}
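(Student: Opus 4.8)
The plan is to turn the statement into an extremal problem over spanning trees and reduce it to the star by a weight-exchange argument. For any vector $x$, grouping the quadratic form of $A_{\alpha}$ by edges gives
\[
x^{T}A_{\alpha}(T)x=\sum_{uv\in E(T)}\bigl[\alpha(x_{u}^{2}+x_{v}^{2})+2(1-\alpha)x_{u}x_{v}\bigr].
\]
Since $A_{\alpha}(T)$ is symmetric, $\rho(A_{\alpha}(T))=\max_{\Vert x\Vert =1}x^{T}A_{\alpha}(T)x$, and because $A_{\alpha}(T)$ is nonnegative and, for $\alpha<1$, irreducible, a maximizer $x$ may be taken nonnegative. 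For such a fixed $x$ I would treat $w(u,v):=\alpha(x_{u}^{2}+x_{v}^{2})+2(1-\alpha)x_{u}x_{v}\ge 0$ as a weight on the pair $\{u,v\}$, so that $x^{T}A_{\alpha}(T)x$ is the total weight of the edge set of $T$.

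The key step is a combinatorial lemma: if $x\ge 0$ has a largest entry at a vertex $r$, then among all spanning trees on the vertex set the star centered at $r$ maximizes $\sum_{uv\in E}w(u,v)$. I would prove this by edge rotation. If a spanning tree $T'$ is not this star, choose a vertex $j$ not adjacent to $r$, let $i$ be the neighbor of $j$ on the unique path from $r$ to $j$, and replace $\{i,j\}$ by $\{r,j\}$; this preserves the spanning-tree property and changes the total weight by
\[
w(r,j)-w(i,j)=(x_{r}-x_{i})\bigl[\alpha(x_{r}+x_{i})+2(1-\alpha)x_{j}\bigr]\ge 0,
\]
since $x_{r}$ is maximal and every factor is nonnegative. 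Each move strictly raises the degree of $r$, so finitely many moves reach the star centered at $r$ without lowering the weight.

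Chaining these facts yields the bound. Taking $x$ to be a nonnegative Perron vector of $A_{\alpha}(T)$ and $r$ a vertex of maximal $x_{r}$, with $S=K_{1,n-1}$ the star centered at $r$,
\[
\rho(A_{\alpha}(T))=x^{T}A_{\alpha}(T)x\le x^{T}A_{\alpha}(S)x\le\rho(A_{\alpha}(S)),
\]
the first inequality being the lemma and the second Rayleigh's. To evaluate $\rho(A_{\alpha}(S))$ I would use the symmetric Perron vector, with value $c$ at the center and common value $\ell$ at the $n-1$ leaves; the eigenvalue equations become $\rho c=\alpha(n-1)c+(1-\alpha)(n-1)\ell$ and $\rho\ell=\alpha\ell+(1-\alpha)c$, whose compatibility condition is $\rho^{2}-\alpha n\rho-(n-1)(1-2\alpha)=0$. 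Its larger root is precisely $\tfrac{1}{2}\bigl(\alpha n+\sqrt{\alpha^{2}n^{2}+4(n-1)(1-2\alpha)}\bigr)$, giving the claimed inequality.

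Finally I would settle equality by tracking when both inequalities are tight. Equality in the Rayleigh step forces $x$ to be the (unique, positive) Perron vector of $A_{\alpha}(S)$, which for $n\ge 3$ has $c>\ell>0$ and a single maximal coordinate; equality in the lemma then makes every rotation gain vanish, but each such gain equals $(x_{r}-x_{i})[\,\cdots]$ with $x_{i}=\ell<c=x_{r}$ and positive bracket, so no rotation was possible and $T=S=K_{1,n-1}$. The boundary cases $\alpha=1$ (where $A_{\alpha}=D$ and the claim is $\Delta\le n-1$, equality iff $T$ is a star) and $n=2$ are immediate. I expect the main obstacle to be this equality analysis together with a careful verification of the rotation lemma---that the swap keeps a spanning tree and never decreases the weight---rather than the numerical bound, which follows formally once the lemma is in hand.
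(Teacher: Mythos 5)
Your proof is correct, and although it rests on the same core computation as the paper---the rotation gain $(x_r-x_i)\bigl[\alpha(x_r+x_i)+2(1-\alpha)x_j\bigr]\ge 0$ is exactly the paper's displayed identity with $(w,v,u)$ renamed $(r,i,j)$---the logical architecture is genuinely different. The paper argues by contradiction from an extremal tree: it takes $T$ maximizing $\rho(A_\alpha)$ over all trees of order $n$, performs a \emph{single} rotation of a pendant vertex onto the vertex with maximal Perron entry, and then invokes Proposition \ref{pro1} (Proposition 15 of \cite{Nik16}) to upgrade the weak inequality between quadratic forms into the strict inequality $\rho(A_\alpha(H))>\rho(A_\alpha(T))$, which settles both the bound and the equality case at once; the value $\rho(A_\alpha(K_{1,n-1}))$ is then cited from Proposition 38 of \cite{Nik16}. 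You instead compare an \emph{arbitrary} tree directly with the star through the chain $\rho(A_\alpha(T))=x^TA_\alpha(T)x\le x^TA_\alpha(S)x\le\rho(A_\alpha(S))$: your spanning-tree weight-maximization lemma (iterated rotations for a fixed nonnegative vector) together with Rayleigh's inequality replaces Proposition \ref{pro1} entirely, you compute the star's spectral radius from the two-value eigenvector ansatz rather than citing it, and you recover the equality case by tracking tightness in both inequalities. What the paper's route buys is brevity, at the cost of importing a nontrivial strict-monotonicity result; what yours buys is self-containedness---only standard Perron--Frobenius uniqueness and Rayleigh are needed---plus an explicit, checkable equality analysis. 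Two assertions you leave implicit deserve a line each, though neither is a gap: that $c>\ell$ in the star's Perron vector amounts to $\rho(A_\alpha(K_{1,n-1}))>1$, which holds for $n\ge3$, $\alpha<1$ since $f(t)=t^2-\alpha nt-(n-1)(1-2\alpha)$ satisfies $f(1)=(n-2)(\alpha-1)<0$; and that the bracket in each vanishing gain is strictly positive because $x_j=\ell>0$ and $\alpha<1$ (the case $\alpha=1$ being handled separately, as you do).
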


In the opposite direction, we show that $P_{n}$, the path of order $n$, has
minimal spectral radius among all connected graphs of order $n$:

\begin{theorem}
\label{t3}If $G$ is a connected graph of order $n$ and $\alpha\in\left[
0,1\right]  ,$ then
\[
\rho(A_{\alpha}(G))\geq\rho(A_{\alpha}(P_{n})).
\]
Equality holds if and only if $G=P_{n}$.
\end{theorem}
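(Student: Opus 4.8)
The plan is to reduce the problem to trees by edge-monotonicity, and then to identify $P_n$ as the unique minimizer among trees of order $n$ via a grafting (path-unbalancing) transformation that strictly decreases $\rho(A_\alpha)$. Throughout I would first assume $\alpha\in[0,1)$, so that the off-diagonal entries of $A_\alpha(G)$ equal $(1-\alpha)>0$ exactly on the edges of $G$; hence $A_\alpha(G)$ is a nonnegative irreducible matrix whenever $G$ is connected, and Perron--Frobenius theory applies (positive Perron vector, strict monotonicity of $\rho$ under entrywise domination by an irreducible matrix). The degenerate case $\alpha=1$ is immediate, since $A_1(G)=D(G)$ gives $\rho(A_1(G))=\Delta(G)\ge 2=\rho(A_1(P_n))$ for $n\ge 3$; note that here equality also holds for cycles, so the uniqueness statement is to be read for $\alpha\in[0,1)$.

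First I would reduce to trees. If $G$ is connected but not a tree, pick any spanning tree $T\subsetneq G$. Deleting the edges of $G$ not in $T$ lowers each off-diagonal adjacency entry and lowers the diagonal degree entries at the endpoints of the deleted edges, so $A_\alpha(T)\le A_\alpha(G)$ entrywise while $A_\alpha(T)\ne A_\alpha(G)$. Since $A_\alpha(G)$ is irreducible, strict Perron--Frobenius monotonicity gives $\rho(A_\alpha(T))<\rho(A_\alpha(G))$. Thus it suffices to prove $\rho(A_\alpha(T))\ge\rho(A_\alpha(P_n))$ for every tree $T$ of order $n$, with equality only for $T=P_n$; this simultaneously settles the equality case for non-tree $G$.

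Next, among trees I would use a grafting lemma of Li--Feng type adapted to $A_\alpha$. For a connected graph $H$ with a distinguished vertex $w$ and integers $k\ge l\ge 1$, let $H(k,l)$ be obtained from $H$ by attaching at $w$ two pendant paths of lengths $k$ and $l$. The lemma to establish is $\rho(A_\alpha(H(k,l)))>\rho(A_\alpha(H(k+1,l-1)))$; that is, making the two pendant paths more unequal strictly lowers the spectral radius. Granting this, take any tree $T\ne P_n$: rooting $T$ at a leaf and choosing a branch vertex $w$ (of degree $\ge 3$) at maximum distance from the root, at least two of the branches at $w$ are pendant paths, so the lemma applies. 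Iterating $(k,l)\mapsto(k+1,l-1)$ until $l=0$ merges two pendant paths into one and lowers $\rho(A_\alpha)$ at every step; repeating over all branch vertices transforms $T$ into $P_n$ through a strictly $\rho$-decreasing sequence of trees of the same order $n$. Hence $\rho(A_\alpha(T))>\rho(A_\alpha(P_n))$ for $T\ne P_n$, completing both the inequality and the uniqueness.

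The main obstacle is proving the grafting lemma for $A_\alpha$, because the transformation is not an entrywise comparison and, moreover, passing from $H(k,1)$ to $H(k+1,0)$ changes the degree, hence the diagonal entry, at $w$. I would establish it through the $A_\alpha$-characteristic polynomials of pendant paths: writing $\phi(G,\lambda)=\det(\lambda I-A_\alpha(G))$ and developing the recursion produced by deleting a pendant vertex---where, unlike the adjacency case, one must also track the $\alpha$-correction to the diagonal at the attachment vertex---one reduces the comparison to the sign at $\lambda=\rho(A_\alpha(H(k,l)))$ of a difference of such polynomials, a sign that Perron--Frobenius positivity fixes. Alternatively one can argue variationally: using the Perron eigenvector $x>0$ of $A_\alpha(H(k,l))$, the entries decay monotonically along each pendant path by the eigen-recurrence, and a suitable rearrangement of $x$ furnishes a test vector whose Rayleigh quotient for $A_\alpha(H(k+1,l-1))$ strictly exceeds $\rho(A_\alpha(H(k+1,l-1)))$, giving the strict inequality. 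Getting the degree bookkeeping right in either route is the delicate point.
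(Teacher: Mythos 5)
Your overall architecture --- reduce to trees by Perron--Frobenius edge-monotonicity, then show $P_n$ is the unique minimizer among trees via a Li--Feng-type grafting lemma --- is a genuinely different route from the paper's, and in principle a viable one. The paper shares your first step (the minimizer must be a tree), but then avoids grafting machinery entirely: since a minimizer $G$ would satisfy $\rho(A(G))\leq\rho(A_{\alpha}(G))\leq\rho(A_{\alpha}(P_n))<2$ (Proposition \ref{pro2}), Smith's classification of connected graphs with adjacency spectral radius at most $2$ forces a non-path minimizer to be a proper induced subgraph of $Y_n,F_7,F_8$ or $F_9$, i.e.\ (for $n\geq5$) one of exactly two graphs: a path $(v_2,\ldots,v_n)$ with an extra vertex $v_1$ joined to $v_3$, or to $v_4$. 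Each is killed by a single edge rotation applied to $P_n$: with $\mathbf{x}$ the Perron vector of $A_{\alpha}(P_n)$, the monotonicity $x_2<x_3$ (Proposition \ref{pro3}, an induction on the eigen-equations of the path) makes the quadratic form strictly increase, so the candidate's spectral radius strictly exceeds $\rho(A_{\alpha}(P_n))$, a contradiction. The paper thus trades your global structural lemma for a two-case check against a known classification.

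The genuine gap in your proposal is that the grafting lemma $\rho(A_{\alpha}(H(k,l)))>\rho(A_{\alpha}(H(k+1,l-1)))$ for $k\geq l\geq1$ is exactly where all the difficulty lives, and you do not prove it; everything else you do (spanning-tree reduction, choice of a deepest branch vertex, iteration terminating at $P_n$) is routine. The lemma is indeed true for $\alpha\in[0,1)$, but neither of your sketches closes it: the characteristic-polynomial route is only a plan, and the variational route is stated in a logically impossible form --- no test vector can have Rayleigh quotient for $A_{\alpha}(H(k+1,l-1))$ strictly exceeding $\rho(A_{\alpha}(H(k+1,l-1)))$, because for a symmetric nonnegative matrix the spectral radius \emph{is} the maximum of the Rayleigh quotient. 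What you need is the reverse construction: from a Perron vector of one graph, build a rearranged test vector whose Rayleigh quotient for $A_{\alpha}(H(k,l))$ is at least $\rho(A_{\alpha}(H(k+1,l-1)))$, then argue strictness; the change of the diagonal entry at $w$ when $l$ drops from $1$ to $0$ is part of that work. Until this lemma is established, the proof is incomplete. One real merit of your write-up, though: your caveat at $\alpha=1$ is correct, since $\rho(A_1(C_n))=2=\rho(A_1(P_n))$, so the equality characterization can only be asserted for $\alpha<1$; the paper itself overlooks this point, as its opening claim that deleting a cycle edge ``diminishes'' $\rho$ fails when $\alpha=1$.
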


Results similar to Theorem \ref{t3} were given for $A\left(  G\right)  $ in
\cite{CoSi57} and for $Q\left(  G\right)  $ in \cite{Che02}, but the proof
presented in Section \ref{cg} turns out to be more involved.

In the final Section \ref{Sb}, we give upper and lower bounds for the spectral
radius of the $A_{\alpha}$-matrices of arbitrary graphs; in particular, we
deduce tight bounds for paths and Bethe trees.\medskip

\section{\label{gbt}Generalized Bethe trees}

Given a rooted graph, define the\emph{ level of a vertex} to be equal to its
distance to the root vertex increased by one. A \emph{generalized Bethe tree}
is a rooted tree in which vertices at the same level have the same degree.

Throughout this paper, $B_{k}$ denotes a generalized Bethe tree on $k$ levels.
Let $\left[  k\right]  $ denote the set $\left\{  1,\ldots,k\right\}
.$\ Given a $B_{k}$ and an integer $j\in\left[  k\right]  ,$ write $n_{k-j+1}$
for the number of vertices at level $j$ and $d_{k-j+1}$ for their degree. In
particular, $d_{1}=1$ and $n_{k}=1$.

Further, any $j\in\left[  k-1\right]  $, let $m_{j}=n_{j}/n_{j+1}$. Then, for
any $j\in\left[  k-2\right]  $, we see that
\begin{equation}
n_{j}=(d_{j+1}-1)n_{j+1},\label{nj}%
\end{equation}
and, in particular,
\begin{equation}
n_{k-1}=d_{k}=m_{k-1}.\label{nk}%
\end{equation}
It is worth pointing out that $m_{1},\ldots,m_{k-1}$ are always positive
integers, and that $n_{1}\geq\cdots\geq n_{k}.$

We label the vertices of $B_{k}$ with the numbers $1,\ldots,n$, starting at
the last level $k$ and ending at the root vertex; at each level the vertices
are labeled from left to right, as illustrated in Fig. 1.\medskip

\begin{figure}[ptb]%
\begin{align*}
\begin{tikzpicture} \tikzstyle{every node}=[draw,circle,fill=black,minimum size=3pt, inner sep=0pt] \draw (0,0) node (67) [label=above:$67$] {} (6,-1.5) node (66) [label=left:$66$] {} (0,-1.5) node (65) [label=left:$65$] {} (-6,-1.5) node (64) [label=left:$64$] {} (8,-3) node (63) [label=left:$63$] {} (6,-3) node (62) [label=left:$62$] {} (4,-3) node (61) [label=left:$61$] {} (2,-3)node (60) [label=left:$60$] {} (0,-3) node (59) [label=left:$59$] {} (-2,-3)node (58) [label=left:$58$] {} (-4,-3) node (57) [label=left:$57$] {} (-6,-3) node (56) [label=left:$56$] {} (-8,-3) node (55) [label=left:$55$] {} (8.5,-4.5) node (54) [label=left:$54$] {} (7.5,-4.5) node (53) [label=left:$53$] {} (6.5,-4.5) node (52) [label=left:$52$] {} (5.5,-4.5) node (51) [label=left:$51$] {} (4.5,-4.5) node (50) [label=left:$50$] {} (3.5,-4.5) node (49) [label=left:$49$] {} (2.5,-4.5)node (48) [label=left:$48$] {} (1.5,-4.5)node (47) [label=left:$47$] {} (0.5,-4.5) node (46) [label=left:$46$] {} (-0.5,-4.5) node (45) [label=left:$45$] {} (-1.5,-4.5)node (44) [label=left:$44$] {} (-2.5,-4.5)node (43) [label=left:$43$] {} (-3.5,-4.5) node (42) [label=left:$42$] {} (-4.5,-4.5) node (41) [label=left:$41$] {} (-5.5,-4.5) node (40) [label=left:$40$] {} (-6.5,-4.5) node (39) [label=left:$39$] {} (-7.5,-4.5) node (38) [label=left:$38$] {} (-8.5,-4.5) node (37) [label=left:$37$] {} (0.25,-6) node (19) [label=below:$19$] {} (0.75,-6) node (20) [label=below:$20$] {} (1.25,-6) node (21) [label=below:$21$] {} (1.75,-6) node (22) [label=below:$22$] {} (2.25,-6) node (23) [label=below:$23$] {} (2.75,-6) node (24) [label=below:$24$] {} (3.25,-6) node (25)[label=below:$25$] {} (3.75,-6) node (26) [label=below:$26$] {} (4.25,-6) node (27) [label=below:$27$] {} (4.75,-6) node (28) [label=below:$28$] {} (5.25,-6) node (29) [label=below:$29$] {} (5.75,-6) node (30) [label=below:$30$] {} (6.25,-6) node (31) [label=below:$31$] {} (6.75,-6) node (32) [label=below:$32$] {} (7.25,-6) node (33) [label=below:$33$] {} (7.75,-6) node (34) [label=below:$34$] {} (8.25,-6) node (35) [label=below:$35$] {} (8.75,-6) node (36) [label=below:$36$] {} (-0.25,-6) node (18) [label=below:$18$] {} (-0.75,-6) node (17) [label=below:$17$] {} (-1.25,-6) node (16) [label=below:$16$] {} (-1.75,-6) node (15) [label=below:$15$] {} (-2.25,-6) node (14) [label=below:$14$] {} (-2.75,-6) node (13) [label=below:$13$] {} (-3.25,-6) node (12)[label=below:$12$] {} (-3.75,-6) node (11) [label=below:$11$] {} (-4.25,-6) node (10) [label=below:$10$] {} (-4.75,-6) node (9) [label=below:$9$] {} (-5.25,-6) node (8) [label=below:$8$] {} (-5.75,-6) node (7) [label=below:$7$] {} (-6.25,-6) node (6) [label=below:$6$] {} (-6.75,-6) node (5) [label=below:$5$] {} (-7.25,-6) node (4) [label=below:$4$] {} (-7.75,-6) node (3) [label=below:$3$] {} (-8.25,-6) node (2) [label=below:$2$] {} (-8.75,-6) node (1) [label=below:$1$] {}; \draw (1)--(37); \draw (2)--(37); \draw (3)--(38); \draw (4)--(38); \draw (5)--(39); \draw (6)--(39); \draw (7)--(40); \draw (8)--(40); \draw (9)--(41); \draw (10)--(41); \draw (11)--(42); \draw (12)--(42); \draw (13)--(43);\draw (14)--(43);\draw (15)--(44);\draw (16)--(44);\draw (17)--(45);\draw (18)--(45);\draw (19)--(46);\draw (20)--(46); \draw (21)--(47);\draw (22)--(47);\draw (23)--(48);\draw (24)--(48);\draw (25)--(49);\draw (26)--(49);\draw (27)--(50);\draw (28)--(50);\draw (29)--(51);\draw (30)--(51);\draw (31)--(52);\draw (32)--(52);\draw (33)--(53);\draw (34)--(53);\draw (35)--(54);\draw (36)--(54); \draw (37)--(55);\draw (38)--(55);\draw (39)--(56);\draw (40)--(56);\draw (41)--(57);\draw (42)--(57);\draw (43)--(58);\draw (44)--(58);\draw (45)--(59);\draw (46)--(59);\draw (47)--(60);\draw (48)--(60);\draw (49)--(61);\draw (50)--(61);\draw (51)--(62);\draw (52)--(62);\draw (53)--(63);\draw (54)--(63); \draw (55)--(64);\draw (56)--(64);\draw (57)--(64);\draw (58)--(65);\draw (59)--(65);\draw (60)--(65);\draw (61)--(66);\draw (62)--(66);\draw (63)--(66); \draw (64)--(67);\draw (65)--(67);\draw (66)--(67); \end{tikzpicture}
\end{align*}
\caption{Labeling a generalized Bethe tree}%
\end{figure}
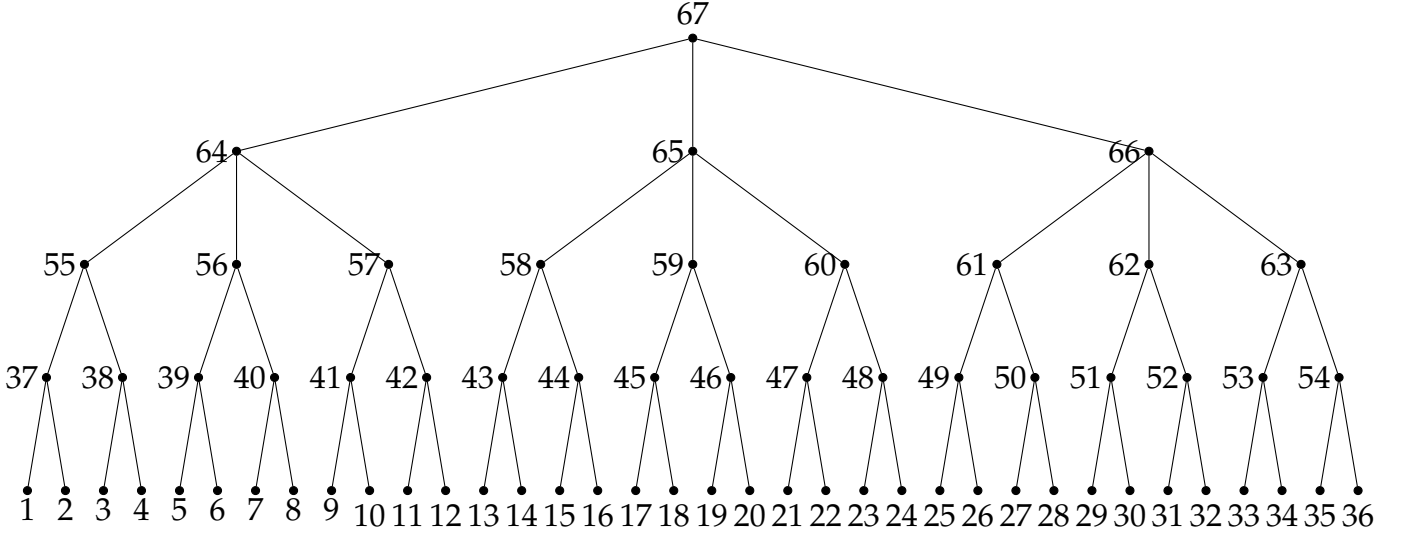

Recall that the Kronecker product $A\otimes B$ of two matrices $A=\left(
a_{i,j}\right)  $ and $B=\left(  b_{i,j}\right)  $ of sizes $m\times m$ and
$n\times n$, is an $mn\times mn$ matrix defined as $A\otimes B=\left(
a_{i,j}B\right)  .$

Two basic properties of $A\otimes B$ are the identities%
\[
\left(  A\otimes B\right)  ^{T}=A^{T}\otimes B^{T}%
\]
and%
\[
\left(  A\otimes B\right)  \left(  C\otimes D\right)  =\left(  AC\otimes
BD\right)  ,
\]
which hold for any matrices of appropriate sizes.\medskip

We write $I_{m}$ for the identity matrix of order $m$ and $\mathbf{j}_{m}$ for
the column $m$-vector of ones$.\medskip$

Set $\beta=1-\alpha,$ and assume that $B_{k}$ is a generalized Bethe tree
labeled as described above. It is not hard to see that the matrix $A_{\alpha
}(B_{k})$ can be represented as a symmetric block tridiagonal matrix
\[
A_{\alpha}(B_{k})=\left[
\begin{array}
[c]{cccccc}%
\alpha I_{n_{1}} & \beta I_{n_{2}}\otimes\mathbf{j}_{m_{1}} & 0 &  &  & 0\\
\beta I_{n_{2}}\otimes\mathbf{j}_{m_{1}}^{T} & \alpha d_{2}I_{n_{2}} & \beta
I_{n_{3}}\otimes\mathbf{j}_{m_{2}} &  &  & \\
&  & \ddots & \ddots & \ddots & \\
&  &  & \beta I_{n_{k-1}}\otimes\mathbf{j}_{m_{k-2}}^{T} & \alpha
d_{k-1}I_{n_{k-1}} & \beta\mathbf{j}_{m_{k-1}}\\
0 &  &  & 0 & \beta\mathbf{j}_{m_{k-1}}^{T} & \alpha d_{k}%
\end{array}
\right]  .
\]

In the next subsection we use this form to calculate the characteristic
polynomial of $A_{\alpha}(B_{k}).$

\subsection{The $A_{\alpha}$-spectra of $B_{k}$}

Given a generalized Bethe tree $B_{k}$, define the polynomials $P_{0}\left(
\lambda\right)  ,\ldots,P_{k}\left(  \lambda\right)  $ as follows:

\begin{definition}
\label{defP}Let%
\[
P_{0}\left(  \lambda\right)  =1,\;P_{1}\left(  \lambda\right)  =\lambda
-\alpha,
\]
and
\[
P_{j}(\lambda)=(\lambda-\alpha d_{j})P_{j-1}(\lambda)-\beta^{2}m_{j-1}%
P_{j-2}(\lambda)
\]
for $j=2,\ldots,k.$
\end{definition}

The polynomials $P_{0}\left(  \lambda\right)  ,\ldots,P_{k}\left(
\lambda\right)  $ can be used to express the characteristic polynomial of
$A_{\alpha}(B_{k})$, as shown in the following theorem:

\begin{theorem}
\label{poly}The characteristic polynomial $\phi\left(  \lambda\right)  $ of
$A_{\alpha}(B_{k})$ satisfies%
\begin{equation}
\phi\left(  \lambda\right)  =P_{k}(\lambda)\prod\limits_{j=1}^{k-1}%
P_{j}(\lambda)^{n_{j}-n_{j+1}}. \label{pA}%
\end{equation}

\end{theorem}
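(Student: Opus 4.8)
The plan is to produce an orthogonal matrix $U$ for which $U^{T}A_{\alpha}(B_{k})U$ is a direct sum of symmetric tridiagonal matrices, and then to read off (\ref{pA}) from the characteristic polynomials of the summands. For each $j\in\left[ k\right]$ let $T_{j}$ denote the $j\times j$ symmetric tridiagonal matrix with main diagonal $(\alpha,\alpha d_{2},\ldots,\alpha d_{j})$ and off-diagonal entries $(\beta\sqrt{m_{1}},\ldots,\beta\sqrt{m_{j-1}})$. The payoff of the whole construction is that expanding $\det(\lambda I-T_{j})$ along its last row yields
\[
\det(\lambda I-T_{j})=(\lambda-\alpha d_{j})\det(\lambda I-T_{j-1})-\beta^{2}m_{j-1}\det(\lambda I-T_{j-2}),
\]
with $\det(\lambda I-T_{0})=1$ and $\det(\lambda I-T_{1})=\lambda-\alpha$; this is exactly the recurrence of Definition~\ref{defP}, so $\det(\lambda I-T_{j})=P_{j}(\lambda)$. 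It therefore suffices to prove that $A_{\alpha}(B_{k})$ is orthogonally similar to $T_{k}\oplus\bigoplus_{j=1}^{k-1}\bigl(T_{j}\otimes I_{n_{j}-n_{j+1}}\bigr)$.

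To build $U$ I would work from the leaves upward, at the $j$-th stage conjugating by a map of the form $I_{n_{j+1}}\otimes Q_{j}$, where $Q_{j}$ is any orthogonal $m_{j}\times m_{j}$ matrix whose first column is $\mathbf{j}_{m_{j}}/\sqrt{m_{j}}$; its remaining columns then span $\mathbf{j}_{m_{j}}^{\perp}$ and $\mathbf{j}_{m_{j}}^{T}Q_{j}=\sqrt{m_{j}}\,e_{1}^{T}$. The point of this choice is that it turns an off-diagonal block of the shape $\beta I_{n_{j+1}}\otimes\mathbf{j}_{m_{j}}$ into $\beta\sqrt{m_{j}}\,(I_{n_{j+1}}\otimes e_{1})$: inside each bundle of $m_{j}$ siblings only one ``symmetric'' coordinate continues to couple to the level above, while the remaining $m_{j}-1$ ``antisymmetric'' coordinates are annihilated by $\mathbf{j}_{m_{j}}^{T}$ and so lose every link to the higher levels.

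The subtlety to watch is that an antisymmetric coordinate does \emph{not} at once become an eigenvector, because it still couples \emph{downward}. The invariant I would carry is that at the start of stage $j$ the already-processed part is a chain of $j$ blocks, each of size $n_{j}$ and each joined to its neighbour by a \emph{scalar} coupling $\beta\sqrt{m_{i}}\,I_{n_{j}}$, sitting on top of the untouched levels $j+1,\ldots,k$. Conjugating \emph{every} block of this chain by the same $I_{n_{j+1}}\otimes Q_{j}$ leaves the scalar couplings intact (a scalar matrix commutes through the conjugation), while in the antisymmetric coordinates it peels off, for each of the $(m_{j}-1)n_{j+1}=n_{j}-n_{j+1}$ antisymmetric directions, a copy of $T_{j}$ that is now completely decoupled from everything above; together these contribute the factor $P_{j}(\lambda)^{\,n_{j}-n_{j+1}}$. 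The surviving symmetric coordinates, merged with level $j+1$, constitute the chain of $j+1$ blocks of size $n_{j+1}$ needed to start stage $j+1$. After stage $k-1$ the chain has shrunk to the single block $T_{k}$ of size $n_{k}=1$, which contributes $P_{k}(\lambda)$; multiplying all the contributions gives (\ref{pA}), and the identity $\sum_{j=1}^{k-1}j\,(n_{j}-n_{j+1})+k\,n_{k}=\sum_{j=1}^{k}n_{j}=n$ confirms that the degrees match.

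I expect the real work to be organisational rather than conceptual: one must fix the nested Kronecker identification $\mathbb{R}^{n_{j}}\cong\mathbb{R}^{n_{j+1}}\otimes\mathbb{R}^{m_{j}}$ compatibly with the leaves-first, left-to-right labeling of Fig.~1, verify that each $I_{n_{j+1}}\otimes Q_{j}$ together with the intervening reorderings composes into a genuine orthogonal $U$, and above all prove that the chain invariant (in particular the scalar form of the internal couplings) really is reproduced at every stage. Granting that invariant, the decoupling of the antisymmetric modes, their identification with $T_{j}$, and the final match with Definition~\ref{defP} are all immediate.
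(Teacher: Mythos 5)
Your proposal is correct, but it takes a genuinely different route from the paper. The paper proves (\ref{pA}) by block Gaussian elimination on the determinant $\left\vert \lambda I-A_{\alpha}(B_{k})\right\vert$: it multiplies each block row by a rational-function multiplier of the form $\beta\frac{P_{j-1}}{P_{j}}I_{n_{j+1}}\otimes\mathbf{j}_{m_{j}}^{T}$ and adds it to the next, so that the diagonal blocks become $\frac{P_{j}}{P_{j-1}}I_{n_{j}}$; this requires assuming $P_{j}(\lambda)\neq0$ for all $j\in\left[k-1\right]$, and the proof closes with the observation that a polynomial identity valid for all but finitely many $\lambda$ holds identically. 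Your orthogonal block-diagonalization avoids that genericity step entirely: no division by the $P_{j}$ ever occurs, and the chain invariant with scalar couplings $\beta\sqrt{m_{i}}\,I$ is exactly what makes the induction go through (note that by (\ref{nj}) and (\ref{nk}) your off-diagonal entries $\beta\sqrt{m_{i}}$ agree with the $\beta\sqrt{d_{i+1}-1}$ and $\beta\sqrt{d_{k}}$ of Definition \ref{T}, so your tridiagonal summands are the paper's $T_{j}$, and your last-row expansion is the paper's Lemma \ref{TP}). Your approach also buys more than the statement asks for: the explicit similarity to $T_{k}\oplus\bigoplus_{j=1}^{k-1}\bigl(T_{j}\otimes I_{n_{j}-n_{j+1}}\bigr)$ delivers at once the multiplicity and spectral-union statements of Theorem \ref{spectrum}, which the paper has to assemble afterwards from Theorem \ref{poly}, Lemma \ref{TP}, and interlacing. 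In effect you have reconstructed and extended to $A_{\alpha}$ the method of \cite{RoSo05}, which this paper cites but replaces by the shorter determinant computation; the trade-off is that your route demands the organisational care you flag (compatible Kronecker identifications and composing the stage-wise conjugations into one orthogonal $U$), whereas the paper's elimination needs none of that bookkeeping.
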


\begin{proof}
Write $\left\vert A\right\vert $ for the determinant of a square matrix $A.$
To prove (\ref{pA}), we shall reduce $\phi\left(  \lambda\right)  =|\lambda
I-A_{\alpha}(B_{k})|$ to the determinant of an upper triangular matrix. For a
start, note that
\[
\phi\left(  \lambda\right)  =\left\vert
\begin{array}
[c]{cccccc}%
P_{1}(\lambda)I_{n_{1}} & -\beta I_{n_{2}}\otimes\mathbf{j}_{m_{1}} & 0 &  &
& 0\\
-\beta I_{n_{2}}\otimes\mathbf{j}_{m_{1}}^{T} & (\lambda-\alpha d_{2}%
)I_{n_{2}} & -\beta I_{n_{3}}\otimes\mathbf{j}_{m_{2}} &  &  & \\
&  & \ddots & \ddots & \ddots & \\
&  &  & -\beta I_{n_{k-1}}\otimes\mathbf{j}_{m_{k-2}}^{T} & (\lambda-\alpha
d_{k-1})I_{n_{k-1}} & -\beta\mathbf{j}_{m_{k-1}}\\
0 &  &  & 0 & -\beta\mathbf{j}_{m_{k-1}}^{T} & \lambda-\alpha d_{k}%
\end{array}
\right\vert .
\]
Let $\lambda\in\mathbb{R}$ be such that $P_{j}\left(  \lambda\right)  \neq0$
for any $j\in\left[  k-1\right]  $, and for any $j\in\left[  k-1\right]  $,
set $P_{j}:=P_{j}\left(  \lambda\right)  .$

Multiplying the first row by $\frac{\beta}{P_{1}}I_{n_{2}}\otimes
\mathbf{j}_{m_{1}}^{T}$ and adding it to the second row, we obtain
\[
\phi\left(  \lambda\right)  =\left\vert
\begin{array}
[c]{ccccc}%
P_{1}I_{n_{1}} & -\beta I_{n_{2}}\otimes\mathbf{j}_{m_{1}} & 0 &  & 0\\
& (\lambda-\alpha d_{2}-\frac{\beta^{2}m_{1}}{P_{1}})I_{n_{2}} & -\beta
I_{n_{3}}\otimes\mathbf{j}_{m_{2}} &  & \\
&  & \ddots & \ddots & \ddots\\
&  & -\beta I_{n_{k-1}}\otimes\mathbf{j}_{m_{k-2}}^{T} & (\lambda-\alpha
d_{k-1})I_{n_{k-1}} & -\beta\mathbf{j}_{m_{k-1}}\\
0 &  & 0 & -\beta\mathbf{j}_{m_{k-1}}^{T} & \lambda-\alpha d_{k}%
\end{array}
\right\vert .
\]
Since
\[
\lambda-\alpha d_{2}-\frac{\beta^{2}m_{1}}{P_{1}}=\frac{(\lambda-\alpha
d_{2})P_{1}-\beta^{2}m_{1}P_{0}}{P_{1}}=\frac{P_{2}}{P_{1}},
\]
we find that
\[
\phi\left(  \lambda\right)  =\left\vert
\begin{array}
[c]{cccccc}%
P_{1}I_{n_{1}} & -\beta I_{n_{2}}\otimes\mathbf{j}_{m_{1}} & 0 &  &  & 0\\
& \frac{P_{2}}{P_{1}}I_{n_{2}} & -\beta I_{n_{3}}\otimes\mathbf{j}_{m_{2}} &
&  & \\
& -\beta I_{n_{3}}\otimes\mathbf{j}_{m_{2}}^{T} & (\lambda-\alpha
d_{3})I_{n_{3}} & \ddots &  & \\
&  & \ddots & \ddots & \ddots & \\
&  &  & -\beta I_{n_{k-1}}\otimes\mathbf{j}_{m_{k-2}}^{T} & (\lambda-\alpha
d_{k-1})I_{n_{k-1}} & -\beta\mathbf{j}_{m_{k-1}}\\
0 &  &  & 0 & -\beta\mathbf{j}_{m_{k-1}}^{T} & \lambda-\alpha d_{k}%
\end{array}
\right\vert .
\]
Next, multiply the second row by $\beta\frac{P_{1}}{P_{2}}I_{n_{3}}%
\otimes\mathbf{j}_{m_{2}}^{T}$ and add it to the third row. Using the
definition of $P_{3}$, we find that
\[
\phi\left(  \lambda\right)  =\left\vert
\begin{array}
[c]{cccccc}%
P_{1}I_{n_{1}} & -\beta I_{n_{2}}\otimes\mathbf{j}_{m_{1}} & 0 &  &  & 0\\
& \frac{P_{2}}{P_{1}}I_{n_{2}} & -\beta I_{n_{3}}\otimes\mathbf{j}_{m_{2}} &
&  & \\
&  & \frac{P_{3}}{P_{2}}I_{n_{3}} & \ddots &  & \\
&  & \ddots & \ddots & \ddots & \\
&  &  & -\beta I_{n_{k-1}}\otimes\mathbf{j}_{m_{k-2}}^{T} & (\lambda-\alpha
d_{k-1})I_{n_{k-1}} & -\beta\mathbf{j}_{m_{k-1}}\\
0 &  &  & 0 & -\beta\mathbf{j}_{m_{k-1}}^{T} & \lambda-\alpha d_{k}%
\end{array}
\right\vert .
\]
Continuing with this procedure, finally we multiply the $(k-1)$th row by
$\beta\frac{P_{k-2}}{P_{k-1}}\mathbf{j}_{m_{k-1}}^{T}$ and add it to the last
row, thus getting
\[
\phi\left(  \lambda\right)  =\left\vert
\begin{array}
[c]{cccccc}%
P_{1}I_{n_{1}} & -\beta I_{n_{2}}\otimes\mathbf{j}_{m_{1}} & 0 &  &  & 0\\
& \frac{P_{2}}{P_{1}}I_{n_{2}} & -\beta I_{n_{3}}\otimes\mathbf{j}_{m_{2}} &
&  & \\
&  & \ddots & \ddots & \ddots & \\
&  &  &  & \frac{P_{k-1}}{P_{k-2}}I_{n_{k-1}} & -\beta\mathbf{j}_{m_{k-1}}\\
0 &  &  &  & 0 & \frac{P_{k}}{P_{k-1}}%
\end{array}
\right\vert .
\]
Therefore,
\begin{align*}
\phi\left(  \lambda\right)   &  =P_{1}^{n_{1}}(\frac{P_{2}}{P_{1}})^{n_{2}%
}(\frac{P_{3}}{P_{2}})^{n_{3}}\ldots(\frac{P_{k-1}}{P_{k-2}})^{n_{k-1}}%
\frac{P_{k}}{P_{k-1}}\\
&  =P_{1}(\lambda)^{n_{1}-n_{2}}P_{2}(\lambda)^{n_{2}-n_{3}}\ldots
P_{k-1}(\lambda)^{n_{k-1}-1}P_{k}(\lambda).
\end{align*}
Thus, equality (\ref{pA}) is proved whenever $P_{j}\left(  \lambda\right)
\neq0$ for all $j\in\left[  k-1\right]  $. Since the polynomials
$P_{1}(\lambda),\ldots,$ $P_{k-1}(\lambda)$ have finitely many roots, the two
sides of (\ref{pA}) are equal for infinitely many values of $\lambda$; hence,
they are identical, completing the proof of Theorem \ref{poly}.
\end{proof}

\bigskip

\begin{definition}
\label{T} For $j=1,2,\ldots,k-1$, let $T_{j}$ be the $j\times j$ leading
principal submatrix of the $k\times k$ symmetric tridiagonal matrix
\begin{equation}
T_{k}=\left[
\begin{array}
[c]{ccccc}%
\alpha & \beta\sqrt{d_{2}-1} & 0 &  & 0\\
\beta\sqrt{d_{2}-1} & \alpha d_{2} & \ddots &  & \\
& \ddots & \ddots & \beta\sqrt{d_{k-1}-1} & \\
&  & \beta\sqrt{d_{k-1}-1} & \alpha d_{k-1} & \beta\sqrt{d_{k}}\\
0 &  & 0 & \beta\sqrt{d_{k}} & \alpha d_{k}%
\end{array}
\right]  , \label{mat}%
\end{equation}
where $\beta=1-\alpha$.
\end{definition}

Since $d_{s}>1$ for all $s=2,3,....,j$, each matrix $T_{j}$ has nonzero
codiagonal entries and it is known that its eigenvalues are simple.

Using the well known three-term recursion formula for the characteristic
polynomials of the leading principal submatrices of a symmetric tridiagonal
matrix and the formulae (\ref{nj}) and (\ref{nk}), one can easily prove the
following assertion:

\begin{lemma}
\label{TP}Let $B_{k}$ be a generalized Bethe tree, and $\alpha\in\left[
0,1\right)  .$ If the matrices $T_{1},\ldots,T_{k}$ are defined as in
Definition \ref{T}, then
\[
\left\vert \lambda I-T_{j}\right\vert =P_{j}(\lambda)
\]
for any $j\in\left[  k\right]  .$
\end{lemma}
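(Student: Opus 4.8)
The plan is to prove the identity by induction on $j$, matching the standard three-term recursion for the characteristic polynomials of a symmetric tridiagonal matrix against Definition \ref{defP}. First I would set $\phi_{j}(\lambda):=\left\vert \lambda I-T_{j}\right\vert$ and recall the classical recursion: writing the diagonal entries of $T_{k}$ as $a_{1},\ldots,a_{k}$ and the codiagonal entries as $b_{1},\ldots,b_{k-1}$ (so that $b_{i}$ joins rows $i$ and $i+1$), one has $\phi_{0}=1$, $\phi_{1}(\lambda)=\lambda-a_{1}$, and $\phi_{j}(\lambda)=(\lambda-a_{j})\phi_{j-1}(\lambda)-b_{j-1}^{2}\phi_{j-2}(\lambda)$ for $j\geq 2$. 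This is obtained by expanding the determinant $\left\vert \lambda I-T_{j}\right\vert$ along its last row.

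Reading off the entries of $T_{k}$ from (\ref{mat}), we have $a_{1}=\alpha$ and $a_{j}=\alpha d_{j}$ for $j=2,\ldots,k$, while $b_{j-1}=\beta\sqrt{d_{j}-1}$ for $j=2,\ldots,k-1$ and the final codiagonal entry is $b_{k-1}=\beta\sqrt{d_{k}}$. The base cases are then immediate: $\phi_{0}=1=P_{0}$ and $\phi_{1}(\lambda)=\lambda-\alpha=P_{1}(\lambda)$, so the recursion agrees with Definition \ref{defP} at the outset.

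The crux is to check that the squared codiagonal entries $b_{j-1}^{2}$ coincide with the coefficients $\beta^{2}m_{j-1}$ appearing in Definition \ref{defP}. Here I would invoke the relations established at the start of Section \ref{gbt}: since $m_{j}=n_{j}/n_{j+1}$, formula (\ref{nj}) gives $m_{j}=d_{j+1}-1$ for every $j\in\left[  k-2\right]$, while (\ref{nk}) gives $m_{k-1}=d_{k}$. Consequently, for $j=2,\ldots,k-1$ we get $b_{j-1}^{2}=\beta^{2}(d_{j}-1)=\beta^{2}m_{j-1}$, and for the final step $j=k$ we get $b_{k-1}^{2}=\beta^{2}d_{k}=\beta^{2}m_{k-1}$. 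Thus both the diagonal shifts $\lambda-a_{j}$ and the subdiagonal coefficients match those in Definition \ref{defP} term by term, and a routine induction yields $\phi_{j}(\lambda)=P_{j}(\lambda)$ for all $j\in\left[  k\right]$.

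There is no serious obstacle here; the only point demanding care is the bookkeeping at the last level, where the codiagonal entry $\beta\sqrt{d_{k}}$ differs in form from the generic $\beta\sqrt{d_{j}-1}$. This is precisely accounted for by the split between (\ref{nj}), valid for $j\leq k-2$, and the special value (\ref{nk}), which forces $m_{k-1}=d_{k}$ rather than $d_{k}-1$. Keeping this distinction straight is exactly what makes the two recursions line up.
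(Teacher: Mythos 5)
Your proof is correct and follows exactly the route the paper intends: the paper gives no written-out proof of Lemma \ref{TP}, remarking only that it follows from the well-known three-term recursion for the characteristic polynomials of the leading principal submatrices of a symmetric tridiagonal matrix together with formulae (\ref{nj}) and (\ref{nk}). Your write-up supplies precisely those details, including the one point that genuinely requires care --- the last codiagonal entry $\beta\sqrt{d_{k}}$, whose square matches $\beta^{2}m_{k-1}$ because (\ref{nk}) gives $m_{k-1}=d_{k}$ rather than $d_{k}-1$.
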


Theorem \ref{poly}, Lemma \ref{TP}, and the interlacing property for the
eigenvalues of Hermitian matrices yield the following summary statement:

\begin{theorem}
\label{spectrum}Let $B_{k}$ be a generalized Bethe tree, and $\alpha\in\left[
0,1\right)  $. If the matrices $T_{1},\ldots,T_{k}$ are defined as in
Definition \ref{T}, then:

(1) The spectrum of $A_{\alpha}(B_{k})$ is the multiset union%
\begin{equation}
\mathrm{Spec}(T_{1})\cup\cdots\cup\mathrm{Spec}(T_{k})\text{;} \label{spec}%
\end{equation}

(2) The multiplicity of each eigenvalue of $T_{j}$ as an eigenvalue of
$A_{\alpha}(B_{k})$ is $n_{j}-n_{j+1}$ if \ $1\leq j\leq k-1$, and is $1$ if
$j=k$. If some eigenvalues obtained in different matrices are equal, their
multiplicities are added together;

(3) The largest eigenvalue of $T_{k}$ is the largest eigenvalue of $A_{\alpha
}(B_{k})$.
\end{theorem}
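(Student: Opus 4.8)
The plan is to derive all three parts from the factorization of $\phi(\lambda)$ already in hand, together with a single invocation of the interlacing theorem. The substantive work — reducing $\phi(\lambda)$ to a product of the $P_j$'s — is carried out in Theorem \ref{poly}, and the identification of each $P_j$ with the characteristic polynomial of $T_j$ is Lemma \ref{TP}; so parts (1) and (2) should reduce to reading off roots with multiplicity.

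First I would substitute Lemma \ref{TP} into formula (\ref{pA}) of Theorem \ref{poly} to obtain
\[
\phi(\lambda)=\left\vert \lambda I-T_{k}\right\vert \prod_{j=1}^{k-1}\left\vert \lambda I-T_{j}\right\vert ^{n_{j}-n_{j+1}}.
\]
Each factor $\left\vert \lambda I-T_{j}\right\vert$ is a monic polynomial whose roots, counted with multiplicity, are exactly the eigenvalues of the Hermitian matrix $T_{j}$. Hence the roots of $\phi(\lambda)$, counted with multiplicity, form the multiset union $\mathrm{Spec}(T_{1})\cup\cdots\cup\mathrm{Spec}(T_{k})$, with $\mathrm{Spec}(T_{j})$ contributed $n_{j}-n_{j+1}$ times for $1\leq j\leq k-1$ and once for $j=k$. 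This yields part (1) together with the multiplicity count of part (2); the clause about equal eigenvalues being added together is automatic, since the multiplicity of a value as a root of a product of polynomials is the sum of its multiplicities in the individual factors.

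For part (3), I would exploit the fact that, by Definition \ref{T}, each $T_{j}$ with $1\leq j\leq k-1$ is the $j\times j$ leading principal submatrix of the symmetric matrix $T_{k}$. The Cauchy interlacing theorem for the eigenvalues of Hermitian matrices then gives, for every such $j$, that the largest eigenvalue of $T_{j}$ is at most the largest eigenvalue of $T_{k}$. Combining this with part (1), the largest eigenvalue of $A_{\alpha}(B_{k})$ equals $\max_{1\leq j\leq k}\lambda_{\max}(T_{j})=\lambda_{\max}(T_{k})$, which is the assertion.

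I do not anticipate a serious obstacle, since the heavy lifting is already packaged in Theorem \ref{poly} and Lemma \ref{TP}. The only point requiring care is to state the interlacing comparison in the correct direction — namely that passing to a leading principal submatrix can only decrease the top eigenvalue — and to make explicit that $T_{j}$ sits inside $T_{k}$ as a leading principal block, which is immediate from the nested construction in Definition \ref{T}.
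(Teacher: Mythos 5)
Your proposal is correct and takes essentially the same route as the paper, which derives Theorem \ref{spectrum} from exactly the ingredients you invoke: the factorization in Theorem \ref{poly}, the identification $P_{j}(\lambda)=\left\vert \lambda I-T_{j}\right\vert$ from Lemma \ref{TP}, and Cauchy interlacing (applied to $T_{j}$ as a leading principal submatrix of $T_{k}$) for part (3). The only detail worth making explicit is that the exact multiplicity claim in part (2) also relies on the fact, noted in the paper right after Definition \ref{T}, that each tridiagonal $T_{j}$ has nonzero codiagonal entries and hence simple eigenvalues, so each eigenvalue of $T_{j}$ occurs exactly $n_{j}-n_{j+1}$ times in the factor $\left\vert \lambda I-T_{j}\right\vert ^{n_{j}-n_{j+1}}$.
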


\subsection{Proof of Theorem \ref{t1}}

A \emph{Bethe tree} $B(d,k)$ is a rooted tree of $k$ levels in which:

- the root has degree $d$;

- the vertices at level $j\ \left(  2\leq j\leq k-1\right)  $ have degree
$d+1$;

- the vertices at level $k$ have degree equal to $1$ (pendant vertices).
\medskip

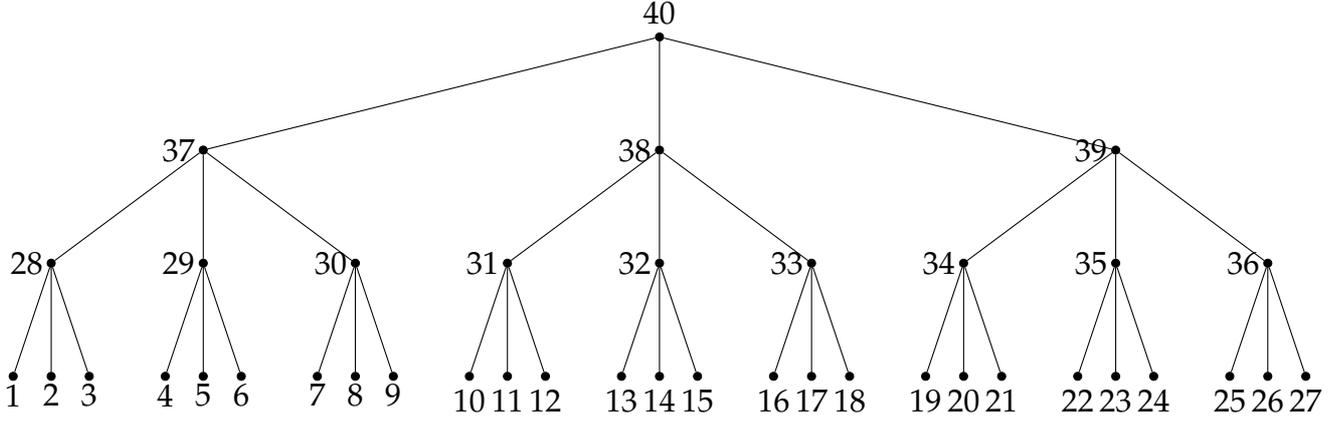
\begin{figure}[ptb]
\begin{tikzpicture}
\tikzstyle{every node}=[draw,circle,fill=black,minimum size=3pt,
inner sep=0pt]
\draw (0,0) node (40) [label=above:$40$] {}
(6,-1.5) node (39) [label=left:$39$] {}
(0,-1.5) node (38) [label=left:$38$] {}
(-6,-1.5) node (37) [label=left:$37$] {}
(8,-3) node (36) [label=left:$36$] {}
(6,-3) node (35) [label=left:$35$] {}
(4,-3) node (34) [label=left:$34$] {}
(2,-3)node (33) [label=left:$33$] {}
(0,-3) node (32) [label=left:$32$] {}
(-2,-3)node (31) [label=left:$31$] {}
(-4,-3) node (30) [label=left:$30$] {}
(-6,-3) node (29) [label=left:$29$] {}
(-8,-3) node (28) [label=left:$28$] {}
(8.5,-4.5) node (27) [label=below:$27$] {}
(8,-4.5) node (26) [label=below:$26$] {}
(7.5,-4.5) node (25) [label=below:$25$] {}
(6.5,-4.5) node (24) [label=below:$24$] {}
(6,-4.5) node (23) [label=below:$23$] {}
(5.5,-4.5) node (22) [label=below:$22$] {}
(4.5,-4.5) node (21) [label=below:$21$] {}
(4,-4.5) node (20) [label=below:$20$] {}
(3.5,-4.5) node (19) [label=below:$19$] {}
(2.5,-4.5)node (18) [label=below:$18$] {}
(2,-4.5) node (17) [label=below:$17$] {}
(1.5,-4.5)node (16) [label=below:$16$] {}
(0.5,-4.5) node (15) [label=below:$15$] {}
(0,-4.5) node (14) [label=below:$14$] {}
(-0.5,-4.5) node (13) [label=below:$13$] {}
(-1.5,-4.5)node (12) [label=below:$12$] {}
(-2,-4.5) node (11) [label=below:$11$] {}
(-2.5,-4.5)node (10) [label=below:$10$] {}
(-3.5,-4.5) node (9) [label=below:$9$] {}
(-4,-4.5) node (8) [label=below:$8$] {}
(-4.5,-4.5) node (7) [label=below:$7$] {}
(-5.5,-4.5) node (6) [label=below:$6$] {}
(-6,-4.5) node (5) [label=below:$5$] {}
(-6.5,-4.5) node (4) [label=below:$4$] {}
(-7.5,-4.5) node (3) [label=below:$3$] {}
(-8,-4.5) node (2) [label=below:$2$] {}
(-8.5,-4.5) node (1) [label=below:$1$] {};
\draw (1)--(28);\draw (2)--(28);\draw (3)--(28);\draw (4)--(29);\draw (5)--(29);\draw (6)--(29);\draw (7)--(30);\draw (8)--(30);\draw (9)--(30);\draw (10)--(31);\draw (11)--(31);\draw (12)--(31);\draw (13)--(32);\draw (14)--(32);\draw (15)--(32);\draw (16)--(33);\draw (17)--(33);\draw (18)--(33);
\draw (19)--(34);\draw (20)--(34);\draw (21)--(34);\draw (22)--(35);\draw (23)--(35);\draw (24)--(35);\draw (25)--(36);\draw (26)--(36);\draw (27)--(36);
\draw (28)--(37);\draw (29)--(37);\draw (30)--(37);\draw (31)--(38);\draw (32)--(38);\draw (33)--(38);\draw (34)--(39);\draw (35)--(39);\draw (36)--(39);
\draw (37)--(40);\draw (38)--(40);\draw (39)--(40);
\end{tikzpicture}
\caption{The tree $B(3,4)$}%
\end{figure}

Clearly, any Bethe tree is a generalized Bethe tree. Theorem \ref{spectrum}
immediately implies the following assertion:

\begin{corollary}
\label{bethe}Let $\alpha\in\left[  0,1\right)  $, and $\beta=1-\alpha$. For
any $j\in\left[  k\right]  $, let $T_{j}$ be the leading principal submatrix
of order $j\times j$ of the $k\times k$ symmetric tridiagonal matrix
\[
T_{k}=\left[
\begin{array}
[c]{ccccc}%
\alpha & \beta\sqrt{d} & 0 &  & 0\\
\beta\sqrt{d} & \alpha\left(  d+1\right)  & \beta\sqrt{d} &  & \\
& \ddots & \ddots & \ddots & \\
&  &  & \alpha\left(  d+1\right)  & \beta\sqrt{d}\\
0 &  & 0 & \beta\sqrt{d} & \alpha d
\end{array}
\right]  .
\]

(1) The spectrum of $A_{\alpha}(B(d,k))$ is the multiset union
\[
\mathrm{Spec}(T_{1})\cup\cdots\cup\mathrm{Spec}(T_{k})\text{;}%
\]

(2) The multiplicity of each eigenvalue of $T_{j}$ as an eigenvalue of
$A_{\alpha}(B(d,k))$ is $d^{k-j-1}(d-1)$ if \ $1\leq j\leq k-1$, and is $1$ if
$j=k$. If some eigenvalues obtained in different matrices are equal, their
multiplicities are added together;

(3) The largest eigenvalue of $T_{k}$ is the largest eigenvalue of $A_{\alpha
}(B(d,k))$.
\end{corollary}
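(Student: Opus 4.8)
The plan is to recognize $B(d,k)$ as a particular generalized Bethe tree and then simply read off the parameters $d_j$ and $n_j$ that feed into Theorem \ref{spectrum} and Definition \ref{T}. Since every clause of the corollary is a verbatim specialization of the corresponding clause in Theorem \ref{spectrum}, no new machinery is needed; the only work is bookkeeping, made slightly delicate by the fact that the index $j$ of $d_j$ and $n_j$ runs opposite to the level numbering.

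First I would pin down the degree sequence. Recalling that level $j$ carries the index $k-j+1$, the root (level $1$) gives $d_k=d$, the pendant vertices (level $k$) give $d_1=1$, and every interior level $j$ with $2\le j\le k-1$ gives $d_2=\cdots=d_{k-1}=d+1$. Substituting these values into the matrix of Definition \ref{T}, I would note that the interior off-diagonal entries become $\beta\sqrt{d_j-1}=\beta\sqrt{d}$ and the last off-diagonal entry becomes $\beta\sqrt{d_k}=\beta\sqrt{d}$, while the diagonal entries become $\alpha,\ \alpha(d+1),\ldots,\alpha(d+1),\ \alpha d$. This is exactly the matrix $T_k$ displayed in the corollary, so parts (1) and (3) follow at once from the corresponding parts of Theorem \ref{spectrum}.

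For part (2) I would compute the vertex counts from the recursions (\ref{nj}) and (\ref{nk}). Since $d_{j+1}-1=d$ for every $j\in\left[  k-2\right]$, relation (\ref{nj}) gives $n_j=d\,n_{j+1}$, and (\ref{nk}) gives the base value $n_{k-1}=d_k=d$; together with $n_k=1$ these yield $n_j=d^{\,k-j}$ for all $j\in\left[  k-1\right]$. Hence the multiplicity of each eigenvalue of $T_j$ is
\[
n_j-n_{j+1}=d^{\,k-j}-d^{\,k-j-1}=d^{\,k-j-1}(d-1),
\]
in agreement with the corollary, while the case $j=k$ contributes multiplicity $1$ exactly as in Theorem \ref{spectrum}.

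The argument has no genuinely hard step: it is a direct specialization. The only place demanding care is the reversed indexing between levels and the subscripts of $d_j$ and $n_j$, which must be tracked consistently to be certain that $d_k=d$ (the root) rather than $d_k=d+1$, and that the geometric growth $n_j=d^{\,k-j}$ is anchored correctly at $n_{k-1}=d$.
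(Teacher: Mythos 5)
Your proposal is correct and is exactly the paper's route: the paper derives the corollary by observing that $B(d,k)$ is a generalized Bethe tree and specializing Theorem \ref{spectrum}, which is precisely your argument with the bookkeeping ($d_k=d$, $d_2=\cdots=d_{k-1}=d+1$, $d_1=1$, $n_j=d^{\,k-j}$, hence multiplicities $n_j-n_{j+1}=d^{\,k-j-1}(d-1)$) written out. The index-reversal details you track are the same ones the paper leaves implicit under ``immediately implies.''
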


\bigskip

\begin{proof}
[\textbf{Proof of Theorem \ref{t1}}]Let $\beta=1-\alpha$. As already
mentioned, each tree $T_{\Delta}$ with maximal degree $\Delta$ can be embedded
in a Bethe tree $B(\Delta-1,k)$ for sufficiently large $k.$ Hence,
\[
\rho(A_{\alpha}(T))\leq\rho(A_{\alpha}(B(\Delta-1,k))).
\]
On the other hand, Corollary \ref{bethe} implies that
\[
\rho(A_{\alpha}(B(\Delta-1,k)))=\rho\left(  T_{k}\right)  ,
\]
where
\[
T_{k}=\left[
\begin{array}
[c]{ccccc}%
\alpha & \beta\sqrt{\Delta-1} & 0 &  & 0\\
\beta\sqrt{\Delta-1} & \alpha\Delta & \beta\sqrt{\Delta-1} &  & \\
& \ddots & \ddots & \ddots & \\
&  &  & \alpha\Delta & \beta\sqrt{\Delta-1}\\
0 &  & 0 & \beta\sqrt{\Delta-1} & \alpha(\Delta-1)
\end{array}
\right]  .
\]
Since $T_{k}$ is an irreducible nonnegative matrix, with no equal row sums,
its largest eigenvalue is less than its largest row sum. That is,
\[
\rho(A_{\alpha}(B(\Delta-1,k)))<\alpha\Delta+2(1-\alpha)\sqrt{\Delta-1},
\]
completing the proof of Theorem \ref{t1}.
\end{proof}

\section{\label{ton}The maximum $A_{\alpha}$-spectral radius of a tree of
order $n$}

For the proof of Theorem \ref{t2} we need some general facts on the matrices
$A_{\alpha}.$ To begin with, if $G$ is a graph of order $n$ and $\mathbf{x}%
:=\left(  x_{1},\ldots,x_{n}\right)  $ is a real vector, the quadratic form
$\left\langle A_{\alpha}\left(  G\right)  \mathbf{x},\mathbf{x}\right\rangle $
can be represented as%
\[
\left\langle A_{\alpha}\left(  G\right)  \mathbf{x},\mathbf{x}\right\rangle
=\sum_{\left\{  u,v\right\}  \in E\left(  G\right)  }(\alpha x_{u}%
^{2}+2\left(  1-\alpha\right)  x_{u}x_{v}+\alpha x_{v}^{2}).
\]

The main tool in the proofs of Theorems \ref{t2} and \ref{t3} is Proposition
15 of \cite{Nik16}, which reads as:

\begin{proposition}
\label{pro1}Let $\alpha\in\left[  0,1\right)  $, and $G$ be a graph with
$A_{\alpha}\left(  G\right)  =A_{\alpha}.$ Let $u,v,w\in V\left(  G\right)  $
and suppose that $\left\{  u,v\right\}  \in E\left(  G\right)  $ and $\left\{
u,w\right\}  \notin E\left(  G\right)  .$ Let $H$ be the graph obtained from
$G$ by deleting the edge $\left\{  u,v\right\}  $ and adding the edge
$\left\{  u,w\right\}  .$ If $\mathbf{x}:=\left(  x_{1},\ldots,x_{n}\right)  $
is a unit eigenvector to $\lambda\left(  A_{\alpha}\right)  $ such that
$x_{u}>0$ and
\[
\left\langle A_{\alpha}\left(  H\right)  \mathbf{x},\mathbf{x}\right\rangle
\geq\left\langle A_{\alpha}\mathbf{x},\mathbf{x}\right\rangle ,
\]
then $\lambda\left(  A_{\alpha}\left(  H\right)  \right)  >\lambda\left(
A_{\alpha}\right)  .$
\end{proposition}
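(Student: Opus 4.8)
The plan is to first obtain the non-strict inequality $\lambda(A_{\alpha}(H))\geq\lambda(A_{\alpha})$ almost for free from the Rayleigh characterization, and then to upgrade it to a strict inequality by analyzing the equality case through the Perron structure of the two nonnegative matrices involved.

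For the easy direction, I use that for a symmetric matrix the largest eigenvalue dominates every Rayleigh quotient evaluated at a unit vector, so $\lambda(A_{\alpha}(H))\geq\langle A_{\alpha}(H)\mathbf{x},\mathbf{x}\rangle$. Chaining this with the hypothesis $\langle A_{\alpha}(H)\mathbf{x},\mathbf{x}\rangle\geq\langle A_{\alpha}\mathbf{x},\mathbf{x}\rangle$ and the identity $\langle A_{\alpha}\mathbf{x},\mathbf{x}\rangle=\lambda(A_{\alpha})$ (valid since $\mathbf{x}$ is a unit $\lambda(A_{\alpha})$-eigenvector) yields $\lambda(A_{\alpha}(H))\geq\lambda(A_{\alpha})$.

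For strictness I argue by contradiction: suppose $\lambda(A_{\alpha}(H))=\lambda(A_{\alpha})=:\lambda$. Then the inequality chain collapses to equalities, so $\langle A_{\alpha}(H)\mathbf{x},\mathbf{x}\rangle=\lambda$, and the equality case of the Rayleigh bound forces $\mathbf{x}$ to be a $\lambda$-eigenvector of $A_{\alpha}(H)$ as well. The key computation is then the difference $A_{\alpha}(H)-A_{\alpha}$, which is supported only on the rows and columns indexed by $u,v,w$: passing from $G$ to $H$ leaves $d(u)$ unchanged, decreases $d(v)$ by one and increases $d(w)$ by one, while the adjacency part loses $\{u,v\}$ and gains $\{u,w\}$. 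Writing $\beta=1-\alpha$, this means in particular that row $w$ of $A_{\alpha}(H)-A_{\alpha}$ has entry $\alpha$ in column $w$, entry $\beta$ in column $u$, and nothing else.

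The hard part is closing the contradiction uniformly, since $\mathbf{x}$ need not be sign-definite (no connectivity is assumed, and for $\alpha>0$ the naive coordinate analysis only yields $x_{v}=x_{w}=-(\beta/\alpha)x_{u}<0$ rather than an outright impossibility). I would resolve this with the absolute-value device: because $A_{\alpha}$ and $A_{\alpha}(H)$ are nonnegative and symmetric, $\lambda$ is their spectral radius, and the pointwise bound $\langle M\mathbf{x},\mathbf{x}\rangle\leq\langle M|\mathbf{x}|,|\mathbf{x}|\rangle$ combined with the Rayleigh bound shows that $|\mathbf{x}|$ is again a $\lambda$-eigenvector of each of $A_{\alpha}$ and $A_{\alpha}(H)$. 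Subtracting the two eigen-equations gives $(A_{\alpha}(H)-A_{\alpha})|\mathbf{x}|=0$, and reading off coordinate $w$ yields $\alpha|x_{w}|+\beta|x_{u}|=0$. Since $\beta>0$ and $|x_{u}|=x_{u}>0$, the left-hand side is strictly positive, a contradiction that covers $\alpha=0$ and $\alpha\in(0,1)$ at once. Hence the inequality must be strict, which proves the proposition.
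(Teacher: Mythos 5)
Your proof is correct, but there is nothing in this paper to compare it against: Proposition \ref{pro1} is imported as Proposition 15 of \cite{Nik16} and used as a black box in the proofs of Theorems \ref{t2} and \ref{t3}, so the only benchmark is the original source. On its own merits your argument is complete. The opening Rayleigh chain $\lambda(A_{\alpha}(H))\geq\langle A_{\alpha}(H)\mathbf{x},\mathbf{x}\rangle\geq\langle A_{\alpha}\mathbf{x},\mathbf{x}\rangle=\lambda(A_{\alpha})$ is the inevitable first step of any proof; the substance lies in the equality case, and there you correctly identified the trap. Comparing the eigenequations of $A_{\alpha}$ and $A_{\alpha}(H)$ at the vertices $u,v,w$ gives only $\alpha x_{w}+(1-\alpha)x_{u}=0$, $\alpha x_{v}+(1-\alpha)x_{u}=0$ and $x_{v}=x_{w}$, which is an outright contradiction when $\alpha=0$ but for $\alpha\in(0,1)$ merely forces $x_{v}=x_{w}=-\frac{1-\alpha}{\alpha}x_{u}<0$, a configuration that cannot be excluded by sign reasoning alone, since $G$ need not be connected and $\mathbf{x}$ need not be a Perron vector. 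Your absolute-value device disposes of this cleanly and uniformly: both matrices are entrywise nonnegative, so $\langle M\mathbf{y},\mathbf{y}\rangle\leq\langle M\left\vert\mathbf{y}\right\vert,\left\vert\mathbf{y}\right\vert\rangle$, and together with the equality case of the Rayleigh bound this shows $\left\vert\mathbf{x}\right\vert$ is a unit eigenvector of both $A_{\alpha}$ and $A_{\alpha}(H)$ to the common largest eigenvalue; then $(A_{\alpha}(H)-A_{\alpha})\left\vert\mathbf{x}\right\vert=0$, and row $w$ reads $\alpha\left\vert x_{w}\right\vert+(1-\alpha)\left\vert x_{u}\right\vert=0$, impossible since $\alpha<1$ and $x_{u}>0$. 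The supporting computations (the effect of the edge switch on degrees, hence on row $w$ of $A_{\alpha}(H)-A_{\alpha}$) are all accurate, so the proof stands, and it has the pleasant feature of covering disconnected graphs and sign-indefinite eigenvectors at once.
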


Having Proposition \ref{pro1} in hand, the proof of Theorem \ref{t2} is now
straightforward:\bigskip

\begin{proof}
[\textbf{Proof of Theorem \ref{t2}}]Let $T$ be a tree of order $n$ with
maximum spectral radius among all trees of order $n$. If $\alpha=1,$ then
$A_{\alpha}\left(  T\right)  =D\left(  T\right)  ,$ so $\rho\left(  D\left(
T\right)  \right)  $ is equal to the maximal degree of $T$, which is $n-1$ if
and only if $T=K_{1,n-1}.$

Next, assume that $0\leq\alpha<1$, let $A_{\alpha}:=A_{\alpha}\left(
T\right)  $, and let $\left(  x_{1},\ldots,x_{n}\right)  $ be a nonnegative
unit eigenvector to $\rho\left(  A_{\alpha}\right)  .$ Since $T$ is connected,
$\left(  x_{1},\ldots,x_{n}\right)  $ is positive (see, e.g., Proposition 13
of \cite{Nik16}). Choose $w\in V\left(  T\right)  $ such that
\[
x_{w}=\max\left\{  x_{1},\ldots,x_{n}\right\}  ,
\]
and assume that $T\neq K_{1,n-1}.$ Hence, there is a vertex $u\in V\left(
T\right)  $ of degree $1$ that is not connected to $w$. Write $v$ for the
neighbor of $u,$ delete the edge $\left\{  u,v\right\}  $, and add the edge
$\left\{  u,w\right\}  .$ Clearly the resulting graph $H$ is also a tree. We
find that
\begin{align*}
\left\langle A_{\alpha}\left(  H\right)  \mathbf{x},\mathbf{x}\right\rangle
-\left\langle A_{\alpha}\mathbf{x},\mathbf{x}\right\rangle  &  =(\alpha
x_{u}^{2}+2\left(  1-\alpha\right)  x_{u}x_{w}+\alpha x_{w}^{2})-(\alpha
x_{u}^{2}+2\left(  1-\alpha\right)  x_{u}x_{v}+\alpha x_{v}^{2})\\
&  =2\left(  1-\alpha\right)  x_{u}\left(  x_{w}-x_{v}\right)  +\alpha\left(
x_{w}-x_{v}\right)  \left(  x_{w}+x_{v}\right)  \geq0.
\end{align*}
Thus, Proposition \ref{pro1} implies that $\lambda\left(  A_{\alpha}\left(
H\right)  \right)  >\lambda\left(  A_{\alpha}\right)  ,$ contradicting the
choice of $T$. Hence, $\rho\left(  A_{\alpha}\left(  T\right)  \right)  $ is
maximal if and only if $T=K_{1,n-1}.$

To complete the proof of Theorem \ref{t2}, Proposition 38 in \cite{Nik16}
yields
\[
\rho\left(  A_{\alpha}\left(  K_{1,n-1}\right)  \right)  =\frac{\alpha
n+\sqrt{\alpha^{2}n^{2}+4\left(  n-1\right)  \left(  1-2\alpha\right)  }}{2}.
\]

\end{proof}

\section{\label{cg}Connected graphs with minimal $A_{\alpha}$-spectral radius}

Our proof of Theorem \ref{t3} is somewhat involved and makes use of a few
known results. \begin{figure}[ptb]%
\begin{align*}
\begin{tikzpicture} \node at (-6.4,-1.7) {$C_n$}; \node at (-1.6,-1.4) {$Y_n \left( n>5\right) $}; \node at (2.5,-1.4) {$K_{1,4}$}; \node at (-7.6,-4.6) {$F_7$}; \node at (-2.6,-4.6) {$F_8$}; \node at (3.3,-4.6) {$F_9$}; \tikzstyle{every node}=[draw,circle,fill=black,minimum size=3pt, inner sep=0pt] \draw (-5.529180,0.705342) node(11) {} (-6.129180,1.141268) node(12) {} (-6.870820,1.141268) node(13) {} (-7.470820,0.705342) node(14) {} (-7.700000,0.000000) node(15) {} (-7.470820,-0.705342) node(16) {} (-6.870820,-1.141268) node(17) {} (-6.129180,-1.141268) node(18) {} (-3,-0.7) node (21){} (-3,0.7) node (22){} (-3,0) node (23){} (-0.3,-0.7) node (26){} (-0.3,0) node (27){} (-0.3,0.7) node (28){} (2.5,0) node (31){} (1.8,0) node (32){} (3.2,0) node (33){} (2.5,-0.7) node (34){}(2.5,0.7) node (35){} (-9,-4) node (41){} (-8.3,-4) node (42){} (-7.6,-4) node (43){} (-6.9,-4) node (44){} (-6.2,-4) node (45){} (-7.6,-3.3) node (46){} (-7.6,-2.6) node (47){} (-4.8,-4) node (51){} (-4.1,-4) node (52){} (-3.4,-4) node (53){} (-2.7,-4) node (54) {} (-2.0,-4) node (55){}(-1.3,-4) node (56){} (-0.6,-4) node (57){} (-2.7,-3.3) node (58){} (0.8,-4) node (61){} (1.5,-4) node (62){} (2.2,-4) node (63){} (2.9,-4) node (64){} (3.6,-4) node (65){} (4.3,-4) node (66){} (5,-4) node (67){} (5.7,-4) node (68){} (2.2,-3.3) node (69){}; \tikzstyle{every node}=[draw,circle,fill=black,minimum size=1pt, inner sep=0pt] \draw (11)--(12); \draw (12)--(13); \draw (13)--(14);\draw (14)--(15); \draw (15)--(16); \draw (16)--(17); \draw (17)--(18); \draw (21)--(22); \draw (26)--(28);\draw (23)--(-2.3,0); \draw (-1,0)--(27); \draw (-1.9,0) node(26){} (-1.65,0)node(26){} (-1.4,0) node(26){} (-5.529180,-0.705342) node(19){} (-5.300000,-0.000000) node(10){}; \draw (32)--(33); \draw (34)--(35); \draw (41)--(42); \draw (42)--(43); \draw (43)--(44);\draw (44)--(45); \draw (46)--(43); \draw (47)--(46); \draw (51)--(57); \draw (58)--(54); \draw (61)--(68); \draw (69)--(63); \end{tikzpicture}
\end{align*}
\caption{The Smith graphs}%
\end{figure}
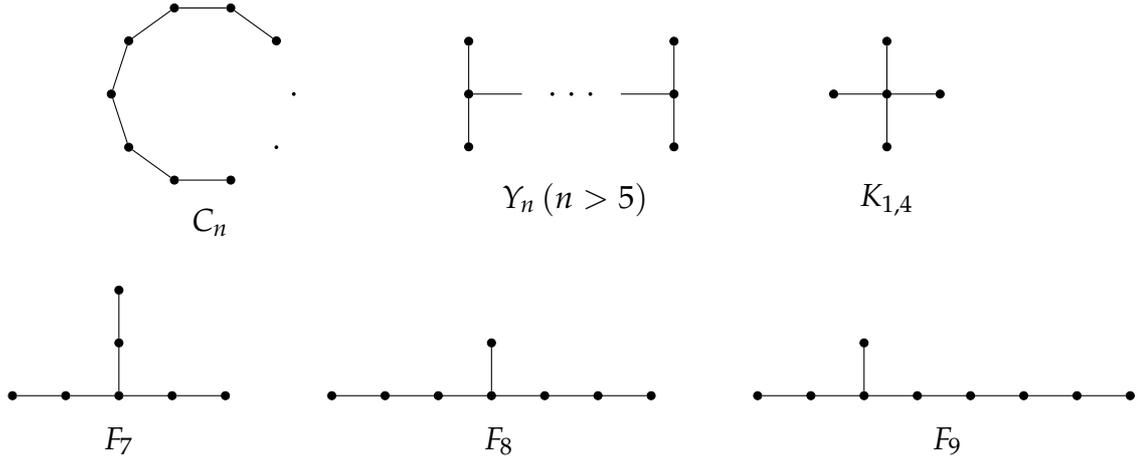

For a start, let us make two remarks on the graphs in Fig. 3. First, the
spectral radius of each of these graphs is precisely $2$; second, the
subscript in their notation stands for their order. The family of these graphs
was first outlined by J.H. Smith in \cite{Smi70}, who showed that any
connected graph with spectral radius at most $2$ is an induced subgraph of
some of them (see also \cite{CRS10}, p. 92). This fact is crucial for our
proof of Theorem \ref{t3}.\medskip

Next, we state a corollary of Proposition 3 of \cite{Nik16}:

\begin{proposition}
\label{pro2}If $G$ is a graph with maximal degree $\Delta$ and $\alpha
\in\left[  0,1\right]  ,$ then
\begin{equation}
\rho\left(  A\left(  G\right)  \right)  \leq\rho\left(  A_{\alpha}\left(
G\right)  \right)  \leq\Delta. \label{bo}%
\end{equation}
If $\rho\left(  A_{\alpha}\left(  G\right)  \right)  =\Delta$, then either
$\alpha=1$, or $G$ is regular.
\end{proposition}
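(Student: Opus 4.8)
The plan is to prove the two inequalities of \eqref{bo} by separate elementary arguments and then to settle the equality case through a Perron eigenvector analysis. Throughout I would use the quadratic-form identity displayed at the start of Section \ref{ton}, together with the observation that $A_{\alpha}(G)$ is a symmetric nonnegative matrix, so that its spectral radius coincides with its largest eigenvalue and admits the variational description $\rho(A_{\alpha}(G))=\max_{\left\Vert \mathbf{x}\right\Vert =1}\left\langle A_{\alpha}(G)\mathbf{x},\mathbf{x}\right\rangle $.

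For the lower bound $\rho(A(G))\leq\rho(A_{\alpha}(G))$, I would take a nonnegative unit eigenvector $\mathbf{x}$ of $A(G)$ belonging to $\rho(A(G))$ and subtract the quadratic-form identity at parameter $0$ from the one at parameter $\alpha$, obtaining
\[
\left\langle A_{\alpha}(G)\mathbf{x},\mathbf{x}\right\rangle -\left\langle A(G)\mathbf{x},\mathbf{x}\right\rangle =\alpha\sum_{\left\{  u,v\right\}  \in E(G)}(x_{u}-x_{v})^{2}\geq0.
\]
Since $\rho(A_{\alpha}(G))\geq\left\langle A_{\alpha}(G)\mathbf{x},\mathbf{x}\right\rangle \geq\left\langle A(G)\mathbf{x},\mathbf{x}\right\rangle =\rho(A(G))$, the left inequality follows. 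For the upper bound $\rho(A_{\alpha}(G))\leq\Delta$, I would note that the row sum of $A_{\alpha}(G)$ at a vertex $u$ equals $\alpha d_{u}+(1-\alpha)d_{u}=d_{u}\leq\Delta$, and that the spectral radius of a nonnegative matrix never exceeds its largest row sum.

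The equality case is where the real work lies, and here I would invoke Perron--Frobenius in earnest, assuming $G$ connected so that $A_{\alpha}(G)$ is irreducible when $\alpha<1$. Suppose $\rho(A_{\alpha}(G))=\Delta$ with $\alpha<1$, and let $\mathbf{x}>0$ be a Perron eigenvector. Choosing a vertex $w$ with $x_{w}=\max_{u}x_{u}$ and reading off the coordinate at $w$ of $A_{\alpha}(G)\mathbf{x}=\Delta\mathbf{x}$ gives
\[
\Delta x_{w}=\alpha d_{w}x_{w}+(1-\alpha)\sum_{v\sim w}x_{v}\leq\alpha d_{w}x_{w}+(1-\alpha)d_{w}x_{w}=d_{w}x_{w}\leq\Delta x_{w},
\]
so every inequality must be an equality. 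In particular $d_{w}=\Delta$, and since $1-\alpha>0$ the middle equality forces $x_{v}=x_{w}$ for each neighbor $v$ of $w$; thus all neighbors of $w$ are themselves maximizers, and by connectedness this propagates until $\mathbf{x}$ is constant. Substituting back into the eigenvalue equation then yields $d_{u}=\Delta$ for every $u$, i.e. $G$ is regular.

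I expect the equality analysis to be the main obstacle, precisely because the propagation step hinges on the factor $1-\alpha$ being nonzero: when $\alpha=1$ one has $A_{\alpha}(G)=D(G)$, whose spectral radius is $\Delta$ for \emph{every} graph, so the argument correctly breaks down exactly in the excluded case. I would therefore keep the split $\alpha=1$ versus $\alpha<1$ explicit, and flag that for a disconnected $G$ the conclusion must be read per component (or connectedness assumed, as in \cite{Nik16}), since otherwise a graph such as a regular component together with an isolated vertex attains $\rho(A_{\alpha}(G))=\Delta$ without being regular.
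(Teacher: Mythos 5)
Your proposal is correct, and it differs from the paper in the most basic way possible: the paper gives no proof of Proposition \ref{pro2} at all, stating it as a corollary of Proposition 3 of \cite{Nik16}. Your argument is thus a self-contained replacement for that citation. The three ingredients you use are all sound: the left inequality of \eqref{bo} follows from the quadratic-form identity, since
\[
\left\langle A_{\alpha}\left(  G\right)  \mathbf{x},\mathbf{x}\right\rangle -\left\langle A\left(  G\right)  \mathbf{x},\mathbf{x}\right\rangle =\alpha\sum_{\left\{  u,v\right\}  \in E\left(  G\right)  }\left(  x_{u}-x_{v}\right)  ^{2}\geq0
\]
(equivalently, $A_{\alpha}\left(  G\right)  -A\left(  G\right)  =\alpha L\left(  G\right)  $ is positive semidefinite) combined with the Rayleigh characterization of the largest eigenvalue of a symmetric nonnegative matrix; the right inequality is the maximum-row-sum bound, the row sums of $A_{\alpha}\left(  G\right)  $ being exactly the degrees; and the equality analysis via a maximal coordinate of a positive Perron vector is the standard propagation argument, which correctly isolates $1-\alpha>0$ as the reason neighbors of a maximizer must again be maximizers. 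Note that your proofs of the two inequalities use no connectivity at all, so \eqref{bo} holds verbatim for every graph; only the equality clause needs care. Your caveat there is not pedantry but an actual correction: as literally stated, the last sentence of Proposition \ref{pro2} fails for disconnected graphs such as $K_{3}\cup K_{1}$ with $\alpha<1$, where $\rho\left(  A_{\alpha}\right)  =\Delta=2$ yet the graph is irregular; the right general conclusion is that $G$ has a $\Delta$-regular component, which follows by applying your connected-case argument to a component attaining the spectral radius. Since the paper only invokes the proposition for connected graphs ($P_{n}$ in Proposition \ref{pro3} and the connected $G$ in the proof of Theorem \ref{t3}), the imprecision is harmless in context, but your explicit split (connectivity assumed, or conclusion read per component) is the statement one should actually prove.
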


We need also a simple property of the entries of an eigenvector to
$\rho\left(  A_{\alpha}\left(  P_{n}\right)  \right)  $, which seems natural,
but is not obvious:

\begin{proposition}
\label{pro3}Let $\alpha\in\left[  0,1\right)  $, and let $\left(  x_{1}%
,\ldots,x_{n}\right)  $ be a unit nonnegative eigenvector to $\rho\left(
A_{\alpha}\left(  P_{n}\right)  \right)  $. If $1\leq i\leq\left\lceil
n/2\right\rceil -1$, then%
\begin{equation}
x_{i}<x_{i+1}. \label{in1}%
\end{equation}
If $n$ is even, then
\begin{equation}
x_{n/2}=x_{n/2+1}. \label{eq1}%
\end{equation}

\end{proposition}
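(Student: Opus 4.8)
The plan is to combine two ingredients: the reversal symmetry of $P_{n}$, which forces the Perron eigenvector to be a palindrome, and the interior eigenvalue equations, which force the consecutive differences $x_{i+1}-x_{i}$ to be strictly decreasing. Together these two facts pin down the sign pattern immediately.

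First I would record the symmetry. For $\alpha\in\left[  0,1\right)  $ we have $\beta=1-\alpha>0$, so $A_{\alpha}(P_{n})$ is a nonnegative matrix whose off-diagonal pattern is that of the connected graph $P_{n}$; hence it is irreducible, and by Perron--Frobenius $\rho:=\rho(A_{\alpha}(P_{n}))$ is a simple eigenvalue whose (unique up to scaling) nonnegative eigenvector $\mathbf{x}=(x_{1},\ldots,x_{n})$ is in fact positive. The map $i\mapsto n+1-i$ is an automorphism of $P_{n}$, so the reversal permutation matrix $R$ (with $Re_{i}=e_{n+1-i}$) satisfies $RA_{\alpha}(P_{n})R=A_{\alpha}(P_{n})$. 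Thus $R\mathbf{x}$ lies in the one-dimensional $\rho$-eigenspace, so $R\mathbf{x}=t\mathbf{x}$; since both vectors are positive and of unit norm, $t=1$, giving $x_{i}=x_{n+1-i}$ for all $i$. In particular this already establishes (\ref{eq1}) when $n$ is even.

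Next I would set $\delta_{i}:=x_{i+1}-x_{i}$ for $1\le i\le n-1$ and use the interior equations. For each interior vertex $i$ (that is, $2\le i\le n-1$), the eigenvalue equation reads $\rho x_{i}=2\alpha x_{i}+\beta(x_{i-1}+x_{i+1})$, so $x_{i-1}+x_{i+1}=\tfrac{\rho-2\alpha}{\beta}x_{i}$ and therefore $\delta_{i}-\delta_{i-1}=x_{i+1}-2x_{i}+x_{i-1}=\bigl(\tfrac{\rho-2\alpha}{\beta}-2\bigr)x_{i}$. By Proposition \ref{pro2}, applied to $P_{n}$ with $\Delta=2$ (for $n\ge 3$ the path is non-regular and $\alpha<1$, so the inequality in (\ref{bo}) is strict), we have $\rho<2$, which after clearing the positive denominator $\beta$ is exactly $\tfrac{\rho-2\alpha}{\beta}<2$. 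Since $x_{i}>0$, this yields $\delta_{i}-\delta_{i-1}<0$, so the differences are strictly decreasing: $\delta_{1}>\delta_{2}>\cdots>\delta_{n-1}$.

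Finally I would marry this monotonicity to the anti-symmetry of the differences that follows from the palindrome property: $\delta_{i}=x_{i+1}-x_{i}=x_{n-i}-x_{n+1-i}=-\delta_{n-i}$. If $n$ is even, setting $i=n/2$ gives $\delta_{n/2}=-\delta_{n/2}=0$ (reproving (\ref{eq1})), and since $\delta_{1}>\cdots>\delta_{n/2}=0$ every earlier difference is positive, which is (\ref{in1}) for $1\le i\le n/2-1=\lceil n/2\rceil-1$. If $n$ is odd, the index $i=(n-1)/2$ pairs with $(n+1)/2$, and anti-symmetry together with strict decrease gives $\delta_{(n-1)/2}>\delta_{(n+1)/2}=-\delta_{(n-1)/2}$, whence $\delta_{(n-1)/2}>0$; monotonicity then makes $\delta_{1},\ldots,\delta_{(n-1)/2}$ all positive, which is (\ref{in1}) for $1\le i\le\lceil n/2\rceil-1=(n-1)/2$. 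The cases $n\le 2$ are trivial: the claimed increasing range is empty, and for $n=2$ the equality is immediate from $\mathbf{x}=\tfrac{1}{\sqrt2}(1,1)$. I expect the only genuinely delicate point to be securing the strict bound $\rho<2$ (and keeping track of the degenerate regular case $n=2$, where Proposition \ref{pro2} gives equality); once $\rho<2$ is in hand, the remainder is bookkeeping with the three-term recurrence and the reversal symmetry.
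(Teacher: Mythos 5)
Your proof is correct, and it rests on the same foundations as the paper's — Perron--Frobenius positivity and uniqueness, the palindrome symmetry $x_{i}=x_{n+1-i}$, the interior eigenvalue equations, and the strict bound $\rho<2$ obtained from Proposition \ref{pro2} — but the finishing argument is genuinely different. The paper inducts from the center outward: it first establishes the base case $x_{\left\lceil n/2\right\rceil -1}<x_{\left\lceil n/2\right\rceil }$ by separate central computations for even and odd $n$, and then runs the induction step by rewriting the eigenequation as $x_{i+1}>\frac{1-\alpha}{\lambda-1-\alpha}x_{i}>x_{i}$; note that this step divides by $\lambda-1-\alpha$, whose positivity is implicitly supplied by the central identity $\left(  \lambda-1-\alpha\right)  x_{n/2}=\left(  1-\alpha\right)  x_{n/2-1}>0$ (and its odd analogue). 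You instead extract from the same equations the statement that the second differences satisfy $x_{i+1}-2x_{i}+x_{i-1}=\bigl(\tfrac{\rho-2\alpha}{\beta}-2\bigr)x_{i}<0$, i.e.\ the eigenvector is a strictly concave sequence, and pair this with the antisymmetry $\delta_{i}=-\delta_{n-i}$ of the first differences; a strictly concave palindrome must strictly increase up to its middle. This organization buys you two things: the even and odd cases are dispatched by one uniform mechanism rather than two separate central calculations, and you never divide by $\lambda-1-\alpha$, so no hidden sign condition needs checking. You are also more scrupulous than the paper on two minor points: deducing $\rho<2$ explicitly from the irregularity of $P_{n}$ for $n\geq3$ (the paper simply cites Proposition \ref{pro2}), and handling the degenerate regular case $n=2$, where the range in (\ref{in1}) is empty and (\ref{eq1}) is immediate.
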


\begin{proof}
The Perron-Frobenius theory of nonnegative matrices implies that $\left(
x_{1},\ldots,x_{n}\right)  $ is unique and positive. Since $P_{n}$ is
symmetric about its center, it follows that
\[
x_{i}=x_{n-i+1}.
\]
for any $i\in\left[  \left\lfloor n/2\right\rfloor \right]  $. Hence, if $n$
is even, then $x_{n/2}=x_{n/2+1}.$

Set $\lambda:=\rho\left(  A_{\alpha}\left(  P_{n}\right)  \right)  $;
Proposition \ref{pro2} implies that $\lambda<2$. If $1<i\leq\left\lceil
n/2\right\rceil -1$, the eigenequations for $\lambda$ and $\left(
x_{1},\ldots,x_{n}\right)  $ read as
\[
\lambda x_{i}=2\alpha x_{i}+\left(  1-\alpha\right)  x_{i+1}+\left(
1-\alpha\right)  x_{i-1}\text{;}%
\]
hence,%
\begin{equation}
\left(  \lambda-2\alpha\right)  x_{i}=\left(  1-\alpha\right)  x_{i+1}+\left(
1-\alpha\right)  x_{i-1}. \label{eeq}%
\end{equation}
If $n$ is even and $i=n/2,$ we find that
\[
\left(  \lambda-2\alpha\right)  x_{n/2}=\left(  1-\alpha\right)
x_{n/2+1}+\left(  1-\alpha\right)  x_{n/2-1}.
\]
Hence, in view of (\ref{eq1}), we get
\[
x_{n/2}=\frac{1-\alpha}{\lambda-1-\alpha}x_{n/2-1}>x_{n/2-1}.
\]
If $n$ is odd, we find that
\[
\left(  \lambda-2\alpha\right)  x_{\left\lceil n/2\right\rceil }=\left(
1-\alpha\right)  x_{\left\lfloor n/2\right\rfloor }+\left(  1-\alpha\right)
x_{\left\lceil n/2\right\rceil -1}=2\left(  1-\alpha\right)  x_{\left\lfloor
n/2\right\rfloor }\text{;}%
\]
consequently,
\[
x_{\left\lfloor n/2\right\rfloor }<x_{\left\lceil n/2\right\rceil }.
\]
Now, we conclude the proof of (\ref{in1}) by induction on the difference
$k=i-\left\lceil n/2\right\rceil +1$. Up to this moment we have proved
(\ref{in1}) for $k=1$. Assume that $k>1$, and that inequality (\ref{in1})
holds for $k^{\prime}=k-1$. This assumption, together with (\ref{eeq}),
implies that
\[
\left(  \lambda-2\alpha\right)  x_{i+1}=\left(  1-\alpha\right)
x_{i+2}+\left(  1-\alpha\right)  x_{i}>\left(  1-\alpha\right)  x_{i+1}%
+\left(  1-\alpha\right)  x_{i},
\]
so%
\[
x_{i+1}>\frac{1-\alpha}{\lambda-1-\alpha}x_{i}>x_{i},
\]
completing the induction step and the proof of (\ref{in1}).
\end{proof}

Armed with these results, we are ready to prove Theorem \ref{t3}:\medskip

\begin{proof}
[\textbf{Proof of Theorem \ref{t3}}]Let $\alpha\in\left[  0,1\right]  $, and
let $G$ be a connected graph of order $n$ such that $\rho\left(  A_{\alpha
}\left(  G\right)  \right)  $ is minimal. Evidently $G$ is a tree, for
otherwise we can remove some edge of $G$, thereby diminishing $\rho\left(
A_{\alpha}\left(  G\right)  \right)  $, contrary to our choice. We assume that
$n\geq5,$ since the only two trees of order $4$ are $P_{4}$ and $K_{1,3}$, and
Theorem \ref{t2} implies that $\rho\left(  A_{\alpha}\left(  P_{4}\right)
\right)  <\rho\left(  A_{\alpha}\left(  K_{1,3}\right)  \right)  $.

For a start, let us note that if $\alpha=1,$ then
\[
\rho\left(  A_{\alpha}\left(  G\right)  \right)  =\rho\left(  D\left(
G\right)  \right)  =\Delta.
\]
Hence, $G=P_{n}$, since $P_{n}$ is the only tree with maximal degree equal to
$2$.

Next, assume that $0\leq\alpha<1$; hence, Proposition \ref{pro2} implies that
\[
\rho\left(  A\left(  G\right)  \right)  \leq\rho\left(  A_{\alpha}\left(
G\right)  \right)  \leq\rho\left(  A_{\alpha}\left(  P_{n}\right)  \right)
<2.
\]
Assume for a contradiction that $G\neq P_{n}.$ Thus, Smith's result implies
that $G$ is a proper induced subgraph of $Y_{n},F_{7},F_{8},$ or $F_{9}$. A
brief inspection of these graphs shows that only two cases are possible:

(a) $n\geq5$ and $G$ is a path $\left(  v_{2},\ldots,v_{n}\right)  $ with an
additional vertex $v_{1}$ joined to $v_{3}$;

(b) $n\geq6$ and $G$ is a path $\left(  v_{2},\ldots,v_{n}\right)  $ with an
additional vertex $v_{1}$ joined to $v_{4}$.

In either case, we apply Proposition \ref{pro1} to complete the proof.

Let $n\geq5$, let $\left(  v_{1},\ldots,v_{n}\right)  $ be a path $P_{n}$ of
order $n$, and let $\mathbf{x}:=\left(  x_{1},\ldots,x_{n}\right)  $ be a
positive unit eigenvector to $\rho\left(  A_{\alpha}\left(  P_{n}\right)
\right)  $. Delete the edge $\left\{  v_{1},v_{2}\right\}  $ and add the edge
$\left\{  v_{1},v_{3}\right\}  $, thereby obtaining the graph $G$ of case (a).
Since Proposition \ref{pro3} implies that $x_{2}<x_{3}$, we find that%
\begin{align*}
\left\langle A_{\alpha}\left(  G\right)  \mathbf{x},\mathbf{x}\right\rangle
-\left\langle A_{\alpha}\left(  P_{n}\right)  \mathbf{x},\mathbf{x}%
\right\rangle  &  =(\alpha x_{1}^{2}+2\left(  1-\alpha\right)  x_{1}%
x_{3}+\alpha x_{3}^{2})-(\alpha x_{1}^{2}+2\left(  1-\alpha\right)  x_{1}%
x_{2}+\alpha x_{2}^{2})\\
&  =2\left(  1-\alpha\right)  x_{1}\left(  x_{3}-x_{2}\right)  +\alpha\left(
x_{3}-x_{2}\right)  \left(  x_{3}+x_{2}\right)  >0.
\end{align*}
Therefore,
\[
\rho\left(  A_{\alpha}\left(  G\right)  \right)  \geq\left\langle A_{\alpha
}\left(  G\right)  \mathbf{x},\mathbf{x}\right\rangle >\left\langle A_{\alpha
}\left(  P_{n}\right)  \mathbf{x},\mathbf{x}\right\rangle =\rho\left(
A_{\alpha}\left(  P_{n}\right)  \right)  ,
\]
contradicting the choice of $G$; hence, $G=P_{n}$.

The proof of case (b) is carried out by a similar argument and is omitted.
\end{proof}

\bigskip

\section{\label{Sb}A few bounds on the spectral radius of $A_{\alpha}$}

In this section, we give upper and lower bounds for the spectral radii of the
matrices $A_{\alpha}\left(  G\right)  $ for any graph $G$. These bounds are
sufficiently good to deduce tight estimates of the spectral radii of
$A_{\alpha}(P_{n})$ and $A_{\alpha}\left(  B\left(  d,k\right)  \right)  $. \

We shall use Weyl's inequalities for eigenvalues of Hermitian matrices (see,
e.g. \cite{HoJo85}, p. 181). The conditions for equality in Weyl's
inequalities were first established by So in \cite{So94}$.$ For convenience we
state below the complete theorem of Weyl and So:\medskip

\textbf{Theorem WS} \emph{Let }$A$\emph{\ and }$B$\emph{\ be Hermitian
matrices of order }$n,$\emph{\ and let }$1\leq i\leq n$\emph{\ and }$1\leq
j\leq n.$\emph{\ Then}{%
\begin{align}
\lambda_{i}(A)+\lambda_{j}(B)  &  \leq\lambda_{i+j-n}(A+B),\text{if }i+j\geq
n+1,\label{Wein1}\\
\lambda_{i}(A)+\lambda_{j}(B)  &  \geq\lambda_{i+j-1}(A+B),\text{if }i+j\leq
n+1. \label{Wein2}%
\end{align}
}\emph{In either of these inequalities equality holds if and only if there
exists a nonzero }$n$\emph{-vector that is an eigenvector to each of the three
eigenvalues involved. }\medskip

A simplified version of (\ref{Wein1}) and (\ref{Wein2}) gives%
\begin{equation}
\lambda_{k}\left(  A\right)  +\lambda_{n}\left(  B\right)  \leq\lambda
_{k}\left(  A+B\right)  \leq\lambda_{k}\left(  A\right)  +\lambda_{1}\left(
B\right)  . \label{Wes}%
\end{equation}

Inequalities (\ref{Wes}), together with the basic identity%
\[
A_{\alpha}\left(  G\right)  -A_{\beta}\left(  G\right)  =\left(  \alpha
-\beta\right)  L\left(  G\right)
\]
were used in \cite{Nik16} to establish a number of bounds on the eigenvalues
of $A_{\alpha}\left(  G\right)  $. The same simple ideas can be used for
further refinement, as shown in the following two propositions.

\begin{proposition}
\label{ub}Let $G$ be a graph with $A\left(  G\right)  =A,$ $Q\left(  G\right)
=Q$, and maximum degree $\Delta.$

(i) If $0\leq\alpha\leq1/2,$ then%
\begin{equation}
\rho\left(  A_{\alpha}\left(  G\right)  \right)  \leq\alpha\rho\left(
Q\right)  +\left(  1-2\alpha\right)  \rho\left(  A\right)  . \label{ub1}%
\end{equation}
If $G$ is connected and irregular, equality holds in (\ref{ub1}) if and only
if $\alpha=0$ or $\alpha=1/2.$

(ii) If $1/2\leq\alpha\leq1,$ then%
\begin{equation}
\rho\left(  A_{\alpha}\left(  G\right)  \right)  \leq\left(  1-\alpha\right)
\rho\left(  Q\right)  +\left(  2\alpha-1\right)  \Delta. \label{ub2}%
\end{equation}
If $G$ is connected and irregular, equality holds in (\ref{ub2}) if and only
if $\alpha=1/2$ or $\alpha=1.$
\end{proposition}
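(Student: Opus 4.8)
The plan is to rewrite $A_\alpha(G)$ as a nonnegative linear combination of two Hermitian matrices with known largest eigenvalues, and then apply the subadditivity half of Weyl's inequality, recorded as the right-hand inequality of (\ref{Wes}) with $k=1$, namely $\lambda_1(X+Y)\le\lambda_1(X)+\lambda_1(Y)$. Everything starts from the identity $Q=A+D$, which gives the two rewritings
\[
A_\alpha(G)=\alpha Q+(1-2\alpha)A\qquad\text{and}\qquad A_\alpha(G)=(1-\alpha)Q+(2\alpha-1)D.
\]
In the first, the coefficients $\alpha$ and $1-2\alpha$ are both nonnegative precisely when $0\le\alpha\le1/2$; in the second, $1-\alpha$ and $2\alpha-1$ are both nonnegative precisely when $1/2\le\alpha\le1$.

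Then the bounds themselves are almost immediate. For (i) I would take $X=\alpha Q$ and $Y=(1-2\alpha)A$; using $\lambda_1(cM)=c\,\lambda_1(M)$ for $c\ge0$, together with $\lambda_1(Q)=\rho(Q)$ and $\lambda_1(A)=\rho(A)$ (all four matrices are nonnegative, so their spectral radius is the largest eigenvalue), subadditivity yields (\ref{ub1}). For (ii) I take $X=(1-\alpha)Q$ and $Y=(2\alpha-1)D$ and note $\lambda_1(D)=\max_i d_i=\Delta$; subadditivity then gives (\ref{ub2}).

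The substance of the proposition is the equality analysis, which I expect to be the real obstacle. The two boundary values in each case are trivial: for (i), $\alpha=0$ turns the bound into $\rho(A)\le\rho(A)$ and $\alpha=1/2$ into $\frac{1}{2}\rho(Q)\le\frac{1}{2}\rho(Q)$; symmetrically (ii) is an equality at $\alpha=1/2$ and $\alpha=1$. For the interior of each range I would invoke the equality clause of Theorem WS: equality in $\lambda_1(X+Y)\le\lambda_1(X)+\lambda_1(Y)$ forces a single nonzero vector $\mathbf{z}$ that is an eigenvector of $X$, of $Y$, and of $X+Y$, each to its largest eigenvalue. Because $G$ is connected, $A$ and $Q$ are irreducible nonnegative matrices, so by Perron--Frobenius an eigenvector to $\rho(A)$, respectively $\rho(Q)$, is unique up to scaling and strictly positive; hence in case (i), where $0<\alpha<1/2$ keeps both scalings positive, $\mathbf{z}$ must be a common positive Perron vector of $A$ and $Q$.

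It then remains to show that such a common vector forces $G$ to be regular, contradicting irregularity. In case (i), from $A\mathbf{z}=\rho(A)\mathbf{z}$ and $Q\mathbf{z}=\rho(Q)\mathbf{z}$ I subtract to get $D\mathbf{z}=(\rho(Q)-\rho(A))\mathbf{z}$, and since every entry of $\mathbf{z}$ is positive this forces $d_i=\rho(Q)-\rho(A)$ for all $i$, i.e. $G$ is regular. In case (ii), with $1/2<\alpha<1$, the vector $\mathbf{z}$ is a positive Perron vector of $Q$ and simultaneously an eigenvector of $D$ to $\lambda_1(D)=\Delta$, so $d_iz_i=\Delta z_i$ for all $i$, whence $d_i=\Delta$ for every $i$ and again $G$ is regular. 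Either conclusion contradicts the hypothesis that $G$ is connected and irregular, so strict inequality holds throughout the interior, completing the characterization. The one delicate point to get right is the Perron--Frobenius step: irreducibility (hence connectedness) is exactly what guarantees that the equality eigenvector is strictly positive, and only a strictly positive eigenvector lets the scalar degree condition propagate to all vertices.
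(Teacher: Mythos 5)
Your proof is correct and takes essentially the same route as the paper: the same two decompositions $A_{\alpha}=\alpha Q+(1-2\alpha)A$ and $A_{\alpha}=(1-\alpha)Q+(2\alpha-1)D$, the subadditive half of Weyl's inequality for the bounds, and the Weyl--So equality condition combined with Perron--Frobenius positivity of the common eigenvector to force regularity and reach a contradiction. The only difference is cosmetic: you write out the case \emph{(ii)} equality analysis (eigenvector of $D$ to $\Delta$), which the paper omits as analogous to case \emph{(i)}.
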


\begin{proof}
Set $D:=D\left(  G\right)  $. To prove \emph{(i)} note that
\[
A_{\alpha}\left(  G\right)  =\alpha D+\left(  1-\alpha\right)  A=\alpha
Q+\left(  1-2\alpha\right)  A\text{;}%
\]
hence inequality (\ref{ub1}) follows by Weyl's inequality (\ref{Wes}).

Let $G$ be a connected and irregular graph, and assume for a contradiction
that $0<\alpha<1/2$ and equality holds in (\ref{ub1}). The condition for
equality in (\ref{Wes}) implies that $\rho\left(  Q\right)  $ and $\rho\left(
A\right)  $ have a common eigenvector $\mathbf{x}$; since $G$ is connected,
$\mathbf{x}$ has no zero entries. Clearly $\mathbf{x}$ is an eigenvector to
$D$, that is to say, there is some $\lambda$ such that $D\mathbf{x}%
=\lambda\mathbf{x}$. Therefore the degrees of $G$ are equal, contradicting the
premise that $G$ is irregular. Hence, if $G$ is connected and irregular, and
equality holds in (\ref{ub1}), then $\alpha=0$ or $\alpha=1/2.$ The converse
of this statement is obvious.

To prove \emph{(ii) }note that%
\[
A_{\alpha}=\alpha D+\left(  1-\alpha\right)  \left(  Q-D\right)  =\left(
1-\alpha\right)  Q+\left(  2\alpha-1\right)  D\text{;}%
\]
hence, inequality (\ref{ub2}) follows by Weyl's inequality (\ref{Wes}). The
condition for equality can be proved as in clause \emph{(i)}, so we omit it.
\end{proof}

\medskip

Next, we state another basic identity involving the matrices $A_{\alpha
}\left(  G\right)  $%
\begin{equation}
A_{\alpha}\left(  G\right)  +A_{1-\alpha}\left(  G\right)  =Q\left(  G\right)
. \label{id}%
\end{equation}

Coupled with Theorem WS, inequality (\ref{id}) gives the following result:

\begin{lemma}
\label{le1}If $G$ is a graph and $\alpha\in\left[  0,1\right]  $, then%
\begin{equation}
\rho\left(  A_{\alpha}\left(  G\right)  \right)  +\rho\left(  A_{1-\alpha
}\left(  G\right)  \right)  \geq\rho\left(  Q\left(  G\right)  \right)  .
\label{in}%
\end{equation}
If $G$ is regular, then equality holds in (\ref{in}) for any $\alpha\in\left[
0,1\right]  $. If $G$ is connected and equality holds in (\ref{in}) for some
$\alpha\neq1/2$, then $G$ is regular.
\end{lemma}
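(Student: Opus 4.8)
The plan is to handle the three assertions in turn, using only the identity (\ref{id}) and Theorem WS. First I would record that $A_{\alpha}(G)$, $A_{1-\alpha}(G)$, and $Q(G)$ are nonnegative symmetric matrices, so by Perron--Frobenius each spectral radius equals the largest eigenvalue; in the paper's convention $\rho(A_{\alpha}(G))=\lambda_{1}(A_{\alpha}(G))$, and similarly for the other two. The inequality (\ref{in}) is then immediate: applying the right-hand estimate of (\ref{Wes}) with $A=A_{\alpha}(G)$, $B=A_{1-\alpha}(G)$, and $k=1$, and substituting $A+B=Q(G)$ from (\ref{id}), gives
\[
\rho(Q(G))=\lambda_{1}(Q(G))\leq\lambda_{1}(A_{\alpha}(G))+\lambda_{1}(A_{1-\alpha}(G))=\rho(A_{\alpha}(G))+\rho(A_{1-\alpha}(G)).
\]

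For the regular case, suppose $G$ is $d$-regular and let $\mathbf{j}$ be the all-ones vector. Since $D(G)\mathbf{j}=d\mathbf{j}$ and $A(G)\mathbf{j}=d\mathbf{j}$, I would verify directly that $A_{\alpha}(G)\mathbf{j}=d\mathbf{j}$, $A_{1-\alpha}(G)\mathbf{j}=d\mathbf{j}$, and $Q(G)\mathbf{j}=2d\mathbf{j}$. As $\mathbf{j}$ is positive, it is the Perron eigenvector in each case, so $\rho(A_{\alpha}(G))=\rho(A_{1-\alpha}(G))=d$ and $\rho(Q(G))=2d$; hence equality holds in (\ref{in}) for every $\alpha$.

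The third claim is the substantive one. Assume $G$ is connected and equality holds in (\ref{in}) for some $\alpha\neq1/2$. The inequality above is precisely the case $i=j=1$ of Theorem WS, so its equality clause furnishes a nonzero vector $\mathbf{x}$ that is simultaneously an eigenvector of $A_{\alpha}(G)$, $A_{1-\alpha}(G)$, and $Q(G)$ for their respective largest eigenvalues. Since $G$ is connected, $Q(G)$ is irreducible, so by Perron--Frobenius its largest eigenvalue is simple with a positive eigenvector; thus $\mathbf{x}$ may be taken strictly positive. Being an eigenvector of both $A_{\alpha}(G)$ and $A_{1-\alpha}(G)$, the vector $\mathbf{x}$ is an eigenvector of their difference $A_{\alpha}(G)-A_{1-\alpha}(G)=(2\alpha-1)L(G)$, and because $\alpha\neq1/2$ it is therefore an eigenvector of $L(G)$; it is also an eigenvector of $Q(G)$, hence of $D(G)=\tfrac12\bigl(Q(G)+L(G)\bigr)$. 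But $D(G)$ is diagonal, so a strictly positive eigenvector forces all its diagonal entries, i.e. all degrees of $G$, to coincide; that is, $G$ is regular.

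The heart of the matter, and the only step that requires care, is this last transition: the equality clause of Theorem WS yields merely an \emph{algebraic} common eigenvector, and it is the irreducibility of $Q(G)$ (coming from connectedness) that upgrades this to a \emph{positive} vector, which is exactly what converts the statement ``$\mathbf{x}$ is an eigenvector of the diagonal matrix $D(G)$'' into the combinatorial conclusion that $G$ is regular. The hypothesis $\alpha\neq1/2$ enters solely to make $2\alpha-1\neq0$, allowing the passage from $\mathbf{x}$ being an eigenvector of $(2\alpha-1)L(G)$ to being an eigenvector of $L(G)$ itself.
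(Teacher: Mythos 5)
Your proof is correct and takes essentially the same route as the paper: Weyl's inequality applied to the identity $A_{\alpha}(G)+A_{1-\alpha}(G)=Q(G)$, So's equality condition to extract a common eigenvector, connectedness to make that eigenvector strictly positive, and the observation that a diagonal matrix with a positive eigenvector forces all degrees to be equal. The only cosmetic difference is that you reach the eigen-equation for $D(G)$ via $A_{\alpha}(G)-A_{1-\alpha}(G)=(2\alpha-1)L(G)$ and $D=\tfrac{1}{2}\left(Q+L\right)$, whereas the paper substitutes the eigen-equations of $A_{\alpha}(G)$ and $Q(G)$ directly to obtain $\left(2\alpha-1\right)D(G)\mathbf{x}=\left(\rho\left(A_{\alpha}(G)\right)+\left(\alpha-1\right)\rho\left(Q(G)\right)\right)\mathbf{x}$.
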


\begin{proof}
Inequality (\ref{in}) follows by applying Weyl's inequality (\ref{Wes}) to
identity (\ref{id}). If $G$ is a $d$-regular graph, then
\[
\rho\left(  A_{\alpha}\right)  =\alpha d+\left(  1-\alpha\right)  d=d,
\]
so equality holds in (\ref{in}) for any $\alpha\in\left[  0,1\right]  $.

Now let $G$ be connected and let
\[
\rho\left(  A_{\alpha}\left(  G\right)  \right)  +\rho\left(  A_{1-\alpha
}\left(  G\right)  \right)  =\rho\left(  Q\left(  G\right)  \right)
\]
for some $\alpha\neq1/2$. The conditions for equality in Weyl's inequalities
imply that $\rho\left(  Q\left(  G\right)  \right)  $, $\rho\left(  A_{\alpha
}\left(  G\right)  \right)  $, and $\rho\left(  A_{1-\alpha}\left(  G\right)
\right)  $ have a common eigenvector $\mathbf{x}=\left(  x_{1},\ldots
,x_{n}\right)  $. Hence
\[
\alpha\left(  D\left(  G\right)  \right)  \mathbf{x}+\left(  1-\alpha\right)
A\left(  G\right)  \mathbf{x}=\rho\left(  A_{\alpha}\left(  G\right)  \right)
\mathbf{x}%
\]
and%
\[
D\left(  G\right)  \mathbf{x}+A\left(  G\right)  \mathbf{x}=\rho\left(
Q\left(  G\right)  \right)  \mathbf{x}.
\]
These equalities lead to
\[
\left(  2\alpha-1\right)  D\left(  G\right)  \mathbf{x}=\left(  \rho\left(
A_{\alpha}\left(  G\right)  \right)  +\left(  \alpha-1\right)  \rho\left(
Q\left(  G\right)  \right)  \right)  \mathbf{x}\text{.}%
\]
Hence for any vertex $v\in V\left(  G\right)  $, we see that
\[
\left(  2\alpha-1\right)  d\left(  v\right)  x_{v}=\left(  \rho\left(
A_{\alpha}\left(  G\right)  \right)  +\left(  \alpha-1\right)  \rho\left(
Q\left(  G\right)  \right)  \right)  x_{v}.
\]
Since $G$ is connected, $\mathbf{x}$ has no zero entries; in addition,
$\left(  2\alpha-1\right)  \neq0$. Therefore, $G$ is regular, completing the
proof of the lemma.
\end{proof}

Obviously, identity (\ref{id}) can be applied to transform lower bounds on
$\rho\left(  A_{\alpha}\left(  G\right)  \right)  $ into upper ones, and vice
versa. Below, we shall combine it with Proposition \ref{ub} to produce lower
bounds on $\rho\left(  A_{\alpha}\left(  G\right)  \right)  $.

\begin{proposition}
\label{lb}Let $G$ be a graph with $A\left(  G\right)  =A,$ $Q\left(  G\right)
=Q$, and maximum degree $\Delta.$

(i) If $0\leq\alpha\leq1/2,$ then%
\begin{equation}
\rho\left(  A_{\alpha}\left(  G\right)  \right)  \geq\left(  1-\alpha\right)
\rho\left(  Q\right)  +\left(  2\alpha-1\right)  \Delta\text{;} \label{lb1}%
\end{equation}
If $G$ is connected and irregular, equality holds in (\ref{lb1}) if and only
if $\alpha=1/2.$

(ii) If $1/2\leq\alpha\leq1,$ then%
\begin{equation}
\rho\left(  A_{\alpha}\left(  G\right)  \right)  \geq\alpha\rho\left(
Q\right)  +\left(  1-2\alpha\right)  \rho\left(  A\right)  . \label{lb2}%
\end{equation}
If $G$ is connected and irregular, equality holds in (\ref{lb2}) if and only
if $\alpha=1/2.$
\end{proposition}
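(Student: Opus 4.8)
The plan is to mirror the strategy used for Proposition \ref{ub}, but now pivoting on the identity (\ref{id}) instead of the decompositions of $A_\alpha$ into $Q$ and $A$ or $Q$ and $D$. The key observation is that the two clauses of Proposition \ref{lb} are precisely the ``reflections'' of the two clauses of Proposition \ref{ub} under the substitution $\alpha\mapsto 1-\alpha$: when $0\le\alpha\le 1/2$ we have $1/2\le 1-\alpha\le 1$, so the upper bound (\ref{ub2}) applies to $\rho(A_{1-\alpha}(G))$, and feeding this into Lemma \ref{le1} will convert it into the desired lower bound on $\rho(A_\alpha(G))$.

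First I would prove clause \emph{(i)}. Assuming $0\le\alpha\le 1/2$, set $\beta=1-\alpha$, so that $1/2\le\beta\le 1$. Applying part \emph{(ii)} of Proposition \ref{ub} to $A_\beta(G)=A_{1-\alpha}(G)$ gives
\[
\rho\left(A_{1-\alpha}(G)\right)\le\alpha\,\rho(Q)+(1-2\alpha)\Delta,
\]
since $1-\beta=\alpha$ and $2\beta-1=1-2\alpha$. Combining this with Lemma \ref{le1}, which asserts $\rho(A_\alpha(G))+\rho(A_{1-\alpha}(G))\ge\rho(Q)$, we obtain
\[
\rho\left(A_\alpha(G)\right)\ge\rho(Q)-\rho\left(A_{1-\alpha}(G)\right)\ge\rho(Q)-\alpha\rho(Q)-(1-2\alpha)\Delta=(1-\alpha)\rho(Q)+(2\alpha-1)\Delta,
\]
which is exactly (\ref{lb1}). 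For clause \emph{(ii)}, the same maneuver uses part \emph{(i)} of Proposition \ref{ub} on $A_{1-\alpha}(G)$ (legitimate because $0\le 1-\alpha\le 1/2$ when $1/2\le\alpha\le 1$), yielding the upper bound $\rho(A_{1-\alpha}(G))\le(1-\alpha)\rho(Q)+(1-2(1-\alpha))\rho(A)=(1-\alpha)\rho(Q)-(1-2\alpha)\rho(A)$, and subtracting from $\rho(Q)$ via Lemma \ref{le1} produces (\ref{lb2}).

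The delicate part will be the equality analysis, and this is where I expect the main obstacle to lie. A lower bound obtained by subtracting an upper bound from an equality chain attains equality only when \emph{both} intermediate inequalities are tight simultaneously. So I would argue that equality in (\ref{lb1}) forces equality both in Lemma \ref{le1} (for the given $\alpha$) and in the invoked Proposition \ref{ub}(ii) (for $1-\alpha$). For $0<\alpha<1/2$ we have $\alpha\ne 1/2$, so the equality clause of Lemma \ref{le1} would force $G$ regular, contradicting irregularity; hence in the irregular connected case equality in (\ref{lb1}) can only occur at the boundary value $\alpha=1/2$, where it holds trivially. The analogous dichotomy handles (\ref{lb2}). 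I would write out clause \emph{(i)} in full and, following the paper's style in Proposition \ref{ub}, remark that clause \emph{(ii)} is proved by the same argument and omit the repetition.
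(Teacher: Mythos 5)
Your proposal reproduces the paper's own proof essentially verbatim: both obtain (\ref{lb1}) and (\ref{lb2}) by applying Proposition \ref{ub} to $A_{1-\alpha}(G)$ (with the roles of the two clauses swapped) and subtracting from $\rho(Q)$ via identity (\ref{id}) and Lemma \ref{le1}, and both settle the equality case by noting that equality forces $\rho(A_{\alpha}(G))+\rho(A_{1-\alpha}(G))=\rho(Q)$, whence the equality clause of Lemma \ref{le1} gives $\alpha=1/2$ for connected irregular $G$. One small nit: in your equality analysis the excluded range should be $0\le\alpha<1/2$ rather than $0<\alpha<1/2$, since the endpoint $\alpha=0$ is ruled out by the very same appeal to Lemma \ref{le1}.
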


\begin{proof}
For a start, note that identity (\ref{id}), together with inequality
(\ref{Wes}), gives \emph{ }%
\begin{equation}
\rho\left(  A_{\alpha}\left(  G\right)  \right)  +\rho\left(  A_{1-\alpha
}\left(  G\right)  \right)  \geq\rho\left(  Q\right)  . \label{Wb}%
\end{equation}
To prove \emph{(i)} note that if $0\leq\alpha\leq1/2,$ then $1/2\leq
1-\alpha\leq1,$ so bound (\ref{ub2}) implies that%
\begin{equation}
\rho\left(  A_{1-\alpha}\left(  G\right)  \right)  \leq\alpha\rho\left(
Q\right)  +\left(  1-2\alpha\right)  \Delta\text{.} \label{ub21}%
\end{equation}
Hence, in view of (\ref{Wb}), we get%
\begin{align*}
\rho\left(  A_{\alpha}\left(  G\right)  \right)   &  \geq\rho\left(  Q\right)
-\rho\left(  A_{1-\alpha}\left(  G\right)  \right) \\
&  \geq\rho\left(  Q\right)  -\left(  1-\left(  1-\alpha\right)  \right)
\rho\left(  Q\right)  -\left(  2\left(  1-\alpha\right)  -1\right)  \Delta\\
&  =\left(  1-\alpha\right)  \rho\left(  Q\right)  +\left(  2\alpha-1\right)
\Delta\text{,}%
\end{align*}
proving (\ref{lb1}). Let $G$ be connected and irregular. If equality holds in
(\ref{lb1}), then
\[
\rho\left(  A_{\alpha}\left(  G\right)  \right)  +\rho\left(  A_{1-\alpha
}\left(  G\right)  \right)  =\rho\left(  Q\left(  G\right)  \right)  \text{,}%
\]
and Lemma \ref{le1} implies that $\alpha=1/2$. Clearly, if $\alpha=1/2,$ then
equality holds in (\ref{lb1}).

The proof of \emph{(ii)} follows the same idea. If $1/2\leq\alpha\leq1,$ then
\ $0\leq1-\alpha\leq1/2$, so bound (\ref{ub1}) yields
\begin{align*}
\rho\left(  A_{1-\alpha}\left(  G\right)  \right)   &  \leq\left(
1-\alpha\right)  \rho\left(  Q\right)  +\left(  1-2\left(  1-\alpha\right)
\right)  \rho\left(  A\right) \\
&  =\left(  1-\alpha\right)  \rho\left(  Q\right)  +\left(  2\alpha-1\right)
\rho\left(  A\right)  .
\end{align*}
Hence,%
\begin{align*}
\rho\left(  A_{a}\left(  G\right)  \right)   &  \geq\rho\left(  Q\right)
-\left(  \left(  1-\alpha\right)  \rho\left(  Q\right)  +\left(
2\alpha-1\right)  \rho\left(  A\right)  \right) \\
&  =\alpha\rho\left(  Q\right)  +\left(  1-2\alpha\right)  \rho\left(
A\right)  ,
\end{align*}
proving (\ref{lb2}). The condition for equality can be proved as in clause
\emph{(i)}.
\end{proof}

\medskip

Next, we apply Propositions \ref{ub} and \ref{lb} to estimate $\rho\left(
A_{\alpha}\left(  P_{n}\right)  \right)  $. Recall that
\[
\rho\left(  A\left(  P_{n}\right)  \right)  =2\cos\left(  \frac{\pi}%
{n+1}\right)  \text{ \ and \ \ \ }\rho\left(  Q\left(  P_{n}\right)  \right)
=2+2\cos\left(  \frac{\pi}{n}\right)  .
\]
Combining these facts with Proposition \ref{ub}, we get a tight upper bound on
$\rho\left(  A_{\alpha}\left(  P_{n}\right)  \right)  $:

\begin{corollary}
\label{ubc}The spectral radius of $A_{\alpha}\left(  P_{n}\right)  $
satisfies
\begin{equation}
\rho\left(  A_{\alpha}\left(  P_{n}\right)  \right)  \leq\left\{
\begin{array}
[c]{ll}%
2\alpha+2\left(  1-\alpha\right)  \cos\left(  \frac{\pi}{n+1}\right)  , &
\text{if }0\leq\alpha<1/2\text{;}\\
2\alpha+2\left(  1-\alpha\right)  \cos\left(  \frac{\pi}{n}\right)  , &
\text{if }1/2\leq\alpha\leq1\text{.}%
\end{array}
\right.  \label{up}%
\end{equation}
Equality holds if and only if $\alpha=0,$ $\alpha=1/2,$ or $\alpha=1$.
\end{corollary}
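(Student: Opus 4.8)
The plan is to read the bound off directly from Proposition~\ref{ub}, feeding in the two known spectral radii $\rho(A(P_n))=2\cos(\pi/(n+1))$ and $\rho(Q(P_n))=2+2\cos(\pi/n)$ recalled just above the statement, together with the facts that for $n\geq 3$ the path $P_n$ is connected and irregular and has maximum degree $\Delta=2$. I would split along the two ranges of $\alpha$, matching the two clauses of Proposition~\ref{ub}.

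I expect the range $1/2\leq\alpha\leq 1$ to be the cleaner one: substituting $\Delta=2$ and $\rho(Q(P_n))=2+2\cos(\pi/n)$ into bound (\ref{ub2}) gives
\[
(1-\alpha)\bigl(2+2\cos(\pi/n)\bigr)+(2\alpha-1)\cdot 2=2\alpha+2(1-\alpha)\cos(\pi/n),
\]
which is exactly the claimed bound in this range, so no further estimation is needed; here the Corollary bound is literally the specialization of (\ref{ub2}).

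For $0\leq\alpha<1/2$ I would start from bound (\ref{ub1}), obtaining
\[
\rho(A_\alpha(P_n))\leq \alpha\bigl(2+2\cos(\pi/n)\bigr)+(1-2\alpha)\,2\cos(\pi/(n+1)).
\]
This is not yet the claimed expression, so I would compare it against $2\alpha+2(1-\alpha)\cos(\pi/(n+1))$; their difference works out to $2\alpha\bigl(\cos(\pi/(n+1))-\cos(\pi/n)\bigr)$, which is nonnegative since $\cos$ is decreasing on $(0,\pi)$ and $\pi/(n+1)<\pi/n$. Hence the specialization of (\ref{ub1}) is at most the claimed bound, and the inequality in this range follows by transitivity.

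The only delicate part is the equality analysis, which is where I would be most careful. For $1/2\leq\alpha\leq 1$ equality in the Corollary is equality in (\ref{ub2}), which by Proposition~\ref{ub}(ii), using that $P_n$ is connected and irregular for $n\geq 3$, holds if and only if $\alpha=1/2$ or $\alpha=1$. For $0\leq\alpha<1/2$ equality requires simultaneously that (\ref{ub1}) be tight and that the cosine comparison be tight; the latter forces $\alpha=0$ because $\cos(\pi/(n+1))>\cos(\pi/n)$ strictly, and this is consistent with Proposition~\ref{ub}(i), which already confines equality to $\alpha\in\{0,1/2\}$ and hence to $\alpha=0$ in this range. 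Collecting the two ranges yields equality exactly at $\alpha\in\{0,1/2,1\}$. I expect this bookkeeping, namely tracking the two independent sources of slack in the lower range (the Weyl step and the cosine comparison), to be the main though modest obstacle.
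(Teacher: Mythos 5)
Your proposal is correct and follows essentially the same route as the paper: specialize Proposition~\ref{ub} with $\Delta=2$, $\rho(A(P_n))=2\cos(\pi/(n+1))$ and $\rho(Q(P_n))=2+2\cos(\pi/n)$, absorb the slack $2\alpha\bigl(\cos(\pi/(n+1))-\cos(\pi/n)\bigr)$ in the range $0\leq\alpha<1/2$, and settle equality via the equality clauses of Proposition~\ref{ub}. Your equality bookkeeping is in fact slightly more careful than the paper's (you track both sources of slack and note that irregularity requires $n\geq 3$, a hypothesis the paper leaves implicit), but the argument is the same.
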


\begin{proof}
If $0\leq\alpha<1/2,$ then (\ref{ub1}) implies that%
\begin{align*}
\rho\left(  A_{\alpha}\left(  P_{n}\right)  \right)   &  \leq\alpha\rho\left(
Q\left(  P_{n}\right)  \right)  +\left(  1-2\alpha\right)  \rho\left(
A\left(  P_{n}\right)  \right) \\
&  =\alpha\left(  2+2\cos\left(  \frac{\pi}{n}\right)  \right)  +2\left(
1-2\alpha\right)  \cos\left(  \frac{\pi}{n+1}\right) \\
&  =2\alpha+2\left(  1-\alpha\right)  \cos\left(  \frac{\pi}{n+1}\right)
+2\alpha\left(  \cos\left(  \frac{\pi}{n}\right)  -\cos\left(  \frac{\pi}%
{n+1}\right)  \right) \\
&  \leq2\alpha+2\left(  1-\alpha\right)  \cos\left(  \frac{\pi}{n+1}\right)  .
\end{align*}
In this case, it is clear that if $\alpha=0$, then equality holds in
(\ref{up}). Conversely, if
\[
\rho\left(  A_{\alpha}\left(  P_{n}\right)  \right)  =2\alpha+2\left(
1-\alpha\right)  \cos\left(  \frac{\pi}{n+1}\right)  ,
\]
then Proposition \ref{ub}, \emph{(i)} implies that $\alpha=0.$

If $1/2\leq\alpha\leq1$, then (\ref{ub2}) implies that%
\begin{align*}
\rho\left(  A_{\alpha}\left(  G\right)  \right)   &  \leq\left(
1-\alpha\right)  \rho\left(  Q\left(  P_{n}\right)  \right)  +\left(
2\alpha-1\right)  \Delta\left(  P_{n}\right) \\
&  =\left(  1-\alpha\right)  \left(  2+2\cos\left(  \frac{\pi}{n}\right)
\right)  +2\left(  2\alpha-1\right) \\
&  =2\alpha+2\left(  1-\alpha\right)  \cos\left(  \frac{\pi}{n}\right)  .
\end{align*}
In this case, it is clear that if $\alpha=1/2$ or $\alpha=1$, then equality
holds in (\ref{up}). Conversely, if%
\[
\rho\left(  A_{\alpha}\left(  P_{n}\right)  \right)  =2\alpha+2\left(
1-\alpha\right)  \cos\left(  \frac{\pi}{n}\right)  ,
\]
then clause \emph{(ii) }of\emph{ }Proposition \ref{ub} implies that
$\alpha=1/2$ or $\alpha=1$.
\end{proof}

In a similar way, Proposition \ref{lb} implies a tight lower bound on
$\rho\left(  A_{\alpha}\left(  P_{n}\right)  \right)  :$

\begin{corollary}
\label{lbc}The spectral radius of $A_{\alpha}\left(  P_{n}\right)  $
satisfies
\begin{equation}
\rho\left(  A_{\alpha}\left(  P_{n}\right)  \right)  \geq\left\{
\begin{array}
[c]{ll}%
2\alpha+2\left(  1-\alpha\right)  \cos\left(  \frac{\pi}{n}\right)  , &
\text{if }0\leq\alpha\leq1/2\text{;}\\
2\alpha+2\alpha\cos\left(  \frac{\pi}{n}\right)  -2\left(  2\alpha-1\right)
\cos\left(  \frac{\pi}{n+1}\right)  , & \text{if }1/2<\alpha\leq1\text{.}%
\end{array}
\right.  \label{lp}%
\end{equation}
Equality holds if and only if $\alpha=1/2$.
\end{corollary}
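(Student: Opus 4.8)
The plan is to mirror the proof of Corollary \ref{ubc}, replacing the upper bounds of Proposition \ref{ub} by the lower bounds of Proposition \ref{lb}. Throughout I will use the known values $\rho(A(P_n))=2\cos(\pi/(n+1))$, $\rho(Q(P_n))=2+2\cos(\pi/n)$, and $\Delta(P_n)=2$ (for $n\geq 3$).

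First, for $0\leq\alpha\leq 1/2$, I would invoke bound (\ref{lb1}) directly. Substituting $\Delta=2$ and $\rho(Q)=2+2\cos(\pi/n)$ gives
\[
\rho(A_\alpha(P_n))\geq (1-\alpha)\bigl(2+2\cos(\tfrac{\pi}{n})\bigr)+(2\alpha-1)\cdot 2,
\]
and collecting terms reduces this at once to $2\alpha+2(1-\alpha)\cos(\pi/n)$, the first branch of (\ref{lp}). Second, for $1/2\leq\alpha\leq 1$, I would apply (\ref{lb2}); substituting $\rho(Q)=2+2\cos(\pi/n)$ and $\rho(A)=2\cos(\pi/(n+1))$ yields
\[
\rho(A_\alpha(P_n))\geq \alpha\bigl(2+2\cos(\tfrac{\pi}{n})\bigr)+(1-2\alpha)\cdot 2\cos(\tfrac{\pi}{n+1}),
\]
which rearranges to $2\alpha+2\alpha\cos(\pi/n)-2(2\alpha-1)\cos(\pi/(n+1))$, the second branch of (\ref{lp}). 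These are the only two computations needed, and each is a one-line simplification.

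For the equality analysis, note that $P_n$ (for $n\geq 3$) is connected and irregular, so the equality clauses of Proposition \ref{lb} apply verbatim: equality in (\ref{lb1}) over $0\leq\alpha\leq 1/2$ occurs if and only if $\alpha=1/2$, and equality in (\ref{lb2}) over $1/2\leq\alpha\leq 1$ occurs if and only if $\alpha=1/2$. Hence equality in (\ref{lp}) forces $\alpha=1/2$. Conversely, at $\alpha=1/2$ both branches of (\ref{lp}) collapse to $1+\cos(\pi/n)$, which equals $\tfrac12\rho(Q(P_n))=\rho(A_{1/2}(P_n))$, so equality indeed holds there.

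I do not anticipate any genuine obstacle: the argument is a direct transcription of the proof of Corollary \ref{ubc} with the inequalities reversed. The only point requiring a moment's care is the bookkeeping at the split value $\alpha=1/2$, since the corollary assigns it to the first branch while Proposition \ref{lb}(ii) also covers it; but because the two expressions agree there, the cases are consistent and the equality characterization is unambiguous.
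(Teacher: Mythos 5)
Your proposal is correct and follows essentially the same route as the paper: apply Proposition \ref{lb}(i) with $\Delta(P_n)=2$ for $0\leq\alpha\leq1/2$ and Proposition \ref{lb}(ii) with the known values of $\rho(Q(P_n))$ and $\rho(A(P_n))$ for $\alpha>1/2$, then settle equality via the equality clauses of Proposition \ref{lb}. Your treatment of the equality case is in fact slightly more explicit than the paper's (which dismisses strictness for $\alpha>1/2$ with ``it is not hard to see'' and leaves the converse at $\alpha=1/2$ implicit), but the substance is identical.
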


\begin{proof}
If $0\leq\alpha\leq1/2,$ then inequality (\ref{lb1}) implies that
\begin{align*}
\rho\left(  A_{\alpha}\left(  G\right)  \right)   &  \geq\left(
1-\alpha\right)  \rho\left(  Q\left(  P_{n}\right)  \right)  +\left(
2\alpha-1\right)  \Delta\left(  P_{n}\right) \\
&  =\left(  1-\alpha\right)  \left(  2+2\cos\left(  \frac{\pi}{n}\right)
\right)  +2\left(  2\alpha-1\right) \\
&  =2\alpha+2\left(  1-\alpha\right)  2\cos\left(  \frac{\pi}{n}\right)  .
\end{align*}
If equality holds in (\ref{lp}), then the condition for equality in
(\ref{lb1}) implies that $\alpha=1/2.$

If $1/2<\alpha\leq1,$ then inequality (\ref{lb2}) implies that%
\begin{align*}
\rho\left(  A_{\alpha}\left(  G\right)  \right)   &  \geq\alpha\rho\left(
Q\left(  P_{n}\right)  \right)  +\left(  1-2\alpha\right)  \rho\left(
A\left(  P_{n}\right)  \right) \\
&  =\alpha\left(  2+2\cos\left(  \frac{\pi}{n}\right)  \right)  +2\left(
1-2\alpha\right)  \cos\left(  \frac{\pi}{n+1}\right) \\
&  \geq2\alpha+2\alpha\cos\left(  \frac{\pi}{n}\right)  -2\left(
2\alpha-1\right)  \cos\left(  \frac{\pi}{n+1}\right)  .
\end{align*}
It is not hard to see that in this case inequality (\ref{lp}) is always strict.
\end{proof}

Finally, we give tight bounds on the spectral radius of the Bethe tree
$B\left(  d,k\right)  .$

\begin{proposition}
If $0\leq\alpha\leq1$, then
\begin{equation}
\rho\left(  A_{\alpha}\left(  B\left(  d,k\right)  \right)  \right)
\leq\alpha\left(  d+1\right)  +2\left(  1-\alpha\right)  \sqrt{d}\cos\left(
\frac{\pi}{k+1}\right)  \label{ubb}%
\end{equation}
and%
\begin{equation}
\rho\left(  A_{\alpha}\left(  B\left(  d,k\right)  \right)  \right)
>\alpha\left(  d+1\right)  +2\left(  1-\alpha\right)  \sqrt{d}\cos\left(
\frac{\pi}{k}\right)  -\frac{20\alpha\sqrt{d}}{k^{3}}. \label{lbb}%
\end{equation}

\end{proposition}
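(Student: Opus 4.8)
The plan is to reduce everything to the matrix $T_k$ of Corollary~\ref{bethe}. By part~(3) of that corollary, $\rho(A_\alpha(B(d,k)))$ equals the largest eigenvalue $\lambda_1(T_k)$ of the $k\times k$ tridiagonal matrix displayed there, so it suffices to bound $\lambda_1(T_k)$. Writing $\beta=1-\alpha$ and letting $C$ denote the $k\times k$ adjacency matrix of the path $P_k$ (zeros on the diagonal, ones on both codiagonals), the governing observation is the splitting
\[
T_k=\alpha\,\mathrm{diag}(1,d+1,\ldots,d+1,d)+\beta\sqrt d\,C,
\]
a diagonal matrix plus a scaled path adjacency matrix. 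I will use the classical facts that $\lambda_1(C)=2\cos(\pi/(k+1))$ with eigenvector $(\sin(i\pi/(k+1)))_{i=1}^{k}$, together with the analogous statement for $P_{k-1}$.

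For the upper bound~(\ref{ubb}) I would feed this splitting directly into Weyl's inequality~(\ref{Wes}): since the largest diagonal entry is $\alpha(d+1)$,
\[
\lambda_1(T_k)\le\lambda_1\!\big(\alpha\,\mathrm{diag}(1,d+1,\ldots,d+1,d)\big)+\lambda_1(\beta\sqrt d\,C)=\alpha(d+1)+2\beta\sqrt d\cos(\pi/(k+1)),
\]
which is exactly~(\ref{ubb}). This step is immediate.

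The lower bound~(\ref{lbb}) is the real work, and the main obstacle is controlling the error term: a Rayleigh quotient taken with the Perron eigenvector of the full path $P_k$ produces a boundary correction of order $\alpha(d+1)/k^3$, whose factor $d+1$ is too large to fit under $20\alpha\sqrt d/k^3$. The remedy is to use a test vector vanishing at the first coordinate, where the correction of $T_k$ to $\alpha(d+1)I$ carries the large weight $\alpha d$. Concretely I take $v=(0,w_1,\ldots,w_{k-1})$, where $w=(\sin(i\pi/k))_{i=1}^{k-1}$ is the Perron eigenvector of $A(P_{k-1})$. Since $v_1=0$, the Rayleigh quotient $\langle T_k v,v\rangle/\langle v,v\rangle$ coincides with that of the bottom-right $(k-1)\times(k-1)$ principal submatrix $T'$ of $T_k$; writing $T'=\alpha(d+1)I+\beta\sqrt d\,A(P_{k-1})-\alpha\,e_{k-1}e_{k-1}^{T}$ and using $\sum_i\sin^2(i\pi/k)=k/2$, $A(P_{k-1})w=2\cos(\pi/k)\,w$, and $w_{k-1}^2=\sin^2(\pi/k)$, the quotient evaluates to
\[
\alpha(d+1)+2\beta\sqrt d\cos(\pi/k)-\frac{2\alpha\sin^2(\pi/k)}{k}.
\]
The surviving correction now carries only the factor $\alpha$ (not $\alpha d$), which is precisely what makes the estimate succeed.

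It then remains to pass to a strict inequality and to absorb the correction. For $\alpha\in[0,1)$ the matrix $T_k$ is irreducible and nonnegative (its codiagonal entries are $\beta\sqrt d>0$), so by Perron--Frobenius its Rayleigh quotient is strictly below $\lambda_1(T_k)$ on any vector with a zero entry; hence $\rho(A_\alpha(B(d,k)))$ strictly exceeds the displayed quantity. Finally, using $\sin(\pi/k)\le\pi/k$ and the numerical fact $2\pi^2<20\le 20\sqrt d$ gives $\tfrac{2\alpha\sin^2(\pi/k)}{k}\le\tfrac{20\alpha\sqrt d}{k^3}$, which turns the displayed quantity into the right-hand side of~(\ref{lbb}); chaining the strict Rayleigh bound with this estimate yields~(\ref{lbb}), while the remaining case $\alpha=1$ is trivial since then $\rho(A_\alpha(B(d,k)))=d+1$.
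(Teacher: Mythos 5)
Your proof is correct, and while your upper bound is exactly the paper's argument (the same splitting of $T_k$ into a diagonal matrix plus $\beta\sqrt{d}\,A(P_k)$, followed by Weyl's inequality (\ref{Wes})), your lower bound takes a genuinely different route. The paper imports from \cite{RoRo07} the formula $\rho\left(Q\left(B\left(d,k\right)\right)\right)=(d+1)+2\sqrt{d}\cos\left(\pi/k\right)$, writes $\rho\left(A_{\alpha}\right)\geq\rho\left(Q\right)-\rho\left(A_{1-\alpha}\right)$ via the identity (\ref{id}) and Weyl's inequality, bounds $\rho\left(A_{1-\alpha}\right)$ by (\ref{ubb}), and then controls the resulting error $2\alpha\sqrt{d}\left(\cos\left(\pi/(k+1)\right)-\cos\left(\pi/k\right)\right)$ with the Mean Value Theorem to obtain the $20\alpha\sqrt{d}/k^{3}$ term. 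You instead work directly inside $T_k$ with the test vector $\left(0,\sin\left(\pi/k\right),\ldots,\sin\left((k-1)\pi/k\right)\right)$; your Rayleigh-quotient evaluation is correct (using $\sum_{i=1}^{k-1}\sin^{2}\left(i\pi/k\right)=k/2$, the Perron pair of $A(P_{k-1})$, and $\sin^2\left((k-1)\pi/k\right)=\sin^2\left(\pi/k\right)$), and the Perron--Frobenius step legitimately upgrades it to a strict inequality, since the Perron vector of the irreducible matrix $T_k$ is positive while your test vector has a zero entry. What your approach buys: it is self-contained (no appeal to \cite{RoRo07}); its error term $2\alpha\sin^{2}\left(\pi/k\right)/k\leq2\pi^{2}\alpha/k^{3}$ carries no factor $\sqrt{d}$, so it is marginally sharper than required; and it yields strictness even at $\alpha=0$, where the paper's chain degenerates to a non-strict inequality (there both the correction term and its bound vanish). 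What the paper's approach buys is brevity, at the price of citing an external spectral formula. One small blemish in your write-up: the closing claim $\rho\left(A_{1}\left(B\left(d,k\right)\right)\right)=d+1$ fails for $k=2$, where $B(d,2)=K_{1,d}$ has maximum degree $d$; the $\alpha=1$ case of (\ref{lbb}) still holds trivially there, since $d>(d+1)-20\sqrt{d}/8$, but that case deserves its own line.
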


\begin{proof}
Let $\beta=1-\alpha$. Clause \emph{(3)} of Corollary \ref{bethe} implies that
the spectral radius of $A_{\alpha}(B(d,k))$ is the largest eigenvalue of the
$k\times k$ matrix
\[
T_{k}=\left[
\begin{array}
[c]{ccccc}%
\alpha & \beta\sqrt{d} & 0 &  & 0\\
\beta\sqrt{d} & (d+1)\alpha & \beta\sqrt{d} &  & \\
& \ddots & \ddots & \ddots & \\
&  &  & (d+1)\alpha & \beta\sqrt{d}\\
0 &  & 0 & \beta\sqrt{d} & d\alpha
\end{array}
\right]  .
\]
Clearly, we have
\[
T_{k}=\beta\sqrt{d}A(P_{k})+\left[
\begin{array}
[c]{ccccc}%
\alpha & 0 &  &  & 0\\
0 & (d+1)\alpha &  &  & \\
&  & \ddots &  & \\
&  &  & (d+1)\alpha & 0\\
0 &  &  & 0 & d\alpha
\end{array}
\right]  .
\]
Hence, inequality (\ref{Wes}) implies that
\begin{align*}
\rho(T_{k})  &  =\rho(A_{\alpha}(B(d,k)))\leq\beta\sqrt{d}\rho(A(P_{k}%
))+(d+1)\alpha\\
&  \leq(d+1)\alpha+2(1-\alpha)\sqrt{d}\cos\left(  \frac{\pi}{k+1}\right)  .
\end{align*}

To prove (\ref{lbb}), recall that in \cite{RoRo07}, Theorem 9, it was shown
that
\[
\rho\left(  Q\left(  B\left(  d,k\right)  \right)  \right)  =\left(
d+1\right)  +2\sqrt{d}\cos\left(  \frac{\pi}{k}\right)  .
\]
Now, applying (\ref{id}) and (\ref{ubb}), we get
\begin{align*}
\rho(A_{\alpha}(B(d,k)))  &  \geq\rho\left(  Q\left(  B\left(  d,k\right)
\right)  \right)  -\rho(A_{1-\alpha}(B(d,k)))\\
&  \geq\left(  d+1\right)  +2\sqrt{d}\cos\left(  \frac{\pi}{k}\right)
-2\alpha\sqrt{d}\cos\left(  \frac{\pi}{k+1}\right)  -\left(  1-\alpha\right)
\left(  d+1\right) \\
&  =\alpha\left(  d+1\right)  +2(1-\alpha)\sqrt{d}\cos\left(  \frac{\pi}%
{k}\right)  -2\alpha\sqrt{d}\left(  \cos\left(  \frac{\pi}{k+1}\right)
-\cos\left(  \frac{\pi}{k}\right)  \right)  .
\end{align*}
On the other hand, the Mean Value Theorem implies that
\[
\cos\left(  \frac{\pi}{k+1}\right)  -\cos\left(  \frac{\pi}{k}\right)
=\frac{\pi}{k\left(  k+1\right)  }\sin\left(  \theta\right)
\]
for some $\theta\in\left[  \pi/\left(  k+1\right)  ,\pi/k\right]  $. Since
$\sin\left(  \theta\right)  \leq\theta\leq\pi/k,$ we get
\[
\cos\left(  \frac{\pi}{k+1}\right)  -\cos\left(  \frac{\pi}{k}\right)
=\frac{\pi}{k\left(  k+1\right)  }\sin\left(  \theta\right)  <\frac{\pi^{2}%
}{k^{2}\left(  k+1\right)  }<\frac{10}{k^{3}},
\]
completing the proof of (\ref{lbb}).
\end{proof}

\bigskip

\textbf{Acknowledgement. }Part of this work has been accomplished during the
2016 COMCA conference, hosted by the Universidad Cat\'{o}lica del Norte,
Antofagasta, Chile. The first author is grateful to his hosts for the
wonderful experience.

\bigskip

\bigskip

E-mails:

\ \ \ Vladimir Nikiforov - \textit{vnikifrv@memphis.edu}

\ \ \ Germain Past\'{e}n - \textit{germain.pasten@ucn.cl}

\ \ \ Oscar Rojo - \textit{orojo@ucn.cl}

\ \ \ Ricardo L. Soto - \textit{rsoto@ucn.cl}

\end{document}